\newcommand*{\mailto}[1]{\href{mailto:#1}{\nolinkurl{#1}}}
\definecolor{darkgreen}{rgb}{0.5,0.25,0}
\definecolor{darkblue}{rgb}{0,0,0.56}
\definecolor{answerblue}{rgb}{0,0,0.75}
\newcommand{\ep}{\varepsilon}
\newcommand{\eps}{\varepsilon}
\newcommand{\pd}{\partial}
\renewcommand{\d}{\mathrm{d}}
\newcommand{\Px}{\mathbb{P}}
\newcommand{\sgn}{\mathrm{sgn}}
\newcommand{\bk}[1]{\left(#1 \right)}
\newcommand{\R}{\mathbb{R}}
\newcommand{\N}{\mathbb{N}}
\newcommand{\abs}[1]{\left | #1 \right |}
\newcommand{\norm}[1]{\left \| #1 \right \|}
\newcommand{\bS}{\mathcal{S}}
\DeclareMathOperator*{\Ex}{\mathbb{E}}
\newtheorem{thm}{Theorem}[section]
\newtheorem{prop}[thm]{Proposition}
\newtheorem{lem}[thm]{Lemma}
\theoremstyle{definition}
\theoremstyle{remark}
\newtheorem{rem}{Remark}[section]
\numberwithin{equation}{section}
\title[SDEs with irregular random drift]
{Strong solutions of a stochastic differential equation
with irregular random drift}
\author[Holden]{Helge Holden}
\address[Helge Holden]{Department of Mathematical Sciences\\
  NTNU Norwegian University of Science and Technology\\
  NO-7491 Trondheim\\ Norway}
\email{\mailto{helge.holden@ntnu.no}}
\urladdr{\url{https://www.ntnu.edu/employees/holden}}
\author[Karlsen]{Kenneth H. Karlsen}
\address[Kenneth H.\ Karlsen]{Department of Mathematics \\ 
University of Oslo \\
P.O.\ Box 1053, NO-0316 Oslo\\
Norway}
\email{\mailto{kennethk@math.uio.no}}
\author[Pang]{Peter H.C. Pang}
\address[Peter H.C. Pang]{Department of Mathematical Sciences\\
  NTNU Norwegian University of Science and Technology\\
  NO-7491 Trondheim\\ Norway}
\email{\mailto{peter.pang@ntnu.no}}
\subjclass[2010]{60H10, 34F05}
\keywords{Stochastic differential equation, random drift, 
irregular drift, one-sided gradient bound, strong solution, 
well-posedness, existence, uniqueness}
\thanks{This research was partially supported 
by the Research Council of Norway Toppforsk 
project {\em Waves and Nonlinear Phenomena (WaNP)} (250070).}
\date{\today}
\begin{document}

\begin{abstract}
We present a well-posedness result for  
strong solutions of one-dimensional stochastic 
differential equations (SDEs) of the form
$$
\d X= u(\omega,t,X)\, \d t
+ \frac12 \sigma(\omega,t,X)\sigma'(\omega,t,X)\,\d t
+ \sigma(\omega,t,X) \, \d W(t),
$$
where the drift coefficient $u$ is random 
and irregular. The random and regular noise coefficient 
$\sigma$ may vanish. The main contribution is 
a pathwise uniqueness result under the assumptions that $u$ 
belongs to $L^p(\Omega; L^\infty([0,T];\dot{H}^1(\R)))$ 
for any finite $p\ge 1$, $\Ex\abs{u(t)-u(0)}_{\dot{H}^1(\R)}^2
\to 0$ as $t\downarrow 0$, and $u$ satisfies the 
one-sided gradient bound 
$\partial_x u(\omega,t,x) \le K(\omega, t)$, 
where the process $K(\omega,t )>0$ 
exhibits an exponential moment bound of the 
form $\Ex \exp\bigl(p\int_t^T K(s)\,\d s\bigr) 
\lesssim {t^{-2p}}$ for small times $t$, 
for some $p\ge1$.  This study is motivated by 
ongoing work on the well-posedness of the 
stochastic Hunter--Saxton equation, a stochastic 
perturbation of a nonlinear transport equation that 
arises in the modelling of the director 
field of a nematic liquid crystal. 
In this context, the one-sided bound acts as a selection 
principle for dissipative weak 
solutions of the stochastic partial 
differential equation (SPDE).
\end{abstract}

\maketitle


\section{Introduction}
\subsection{Main result}
In this paper, we prove strong existence 
and pathwise uniqueness for a class of one-dimensional SDEs 
with rough random drift $u=u(\omega,t,x)$ 
and a noise coefficient $\sigma=\sigma(\omega,t,x)$ that is 
random and possibly degenerate. We fix 
a stochastic basis $\bS=\bk{\Omega, \mathcal{F}, 
\{\mathcal{F}_t\}_{t \ge 0}, \mathbb{P}}$ 
consisting of a complete probability space
$\left(\Omega, \mathcal{F},\Px\right)$ 
and a complete right-continuous filtration 
$\{\mathcal{F}_t\}_{t \ge 0}$. Moreover, we fix 
a standard Brownian motion $W$ on $\bS$ 
adapted to the filtration $\{\mathcal{F}_t\}_{t \ge 0}$. 

We are interested in strong solutions $X$, i.e., 
$\mathbb{P}$-almost surely continuous and 
$\{\mathcal{F}_t\}_{t \ge 0}$-adapted stochastic 
processes $X$ satisfying
\begin{align}\label{eq:X_sde_a}
	\d X =u(\omega,t,X) \, \d t 
	+\frac14\left(\sigma^2\right)'(\omega,t,X)\,\d t
	+\sigma(\omega,t, X)\, \d W, 
	\quad X(0) = x \in \R,
\end{align}
where $\sigma'$ denotes the $x$-derivative 
of $\sigma=\sigma(\omega,t,x)$, 
so that $\frac14\left(\sigma^2\right)'
=\frac12 \sigma \partial_x\sigma$. 
For a deterministic, sufficiently regular 
$\sigma=\sigma(x)$, the SDE \eqref{eq:X_sde_a} 
can be written as
\begin{equation}\label{eq:X_sde_b}
	\d X = u(\omega,t,X)\,\d t + \sigma(X)\circ \d W,
\end{equation}
where $\circ$ denotes the Stratonovich differential.

The random non-smooth drift $u$ is an 
$\{\mathcal{F}_t\}_{t \ge 0}$-progressively measurable 
process that belongs to 
$L^p(\Omega;L^\infty([0,T];\dot{H}^1(\R)))$, 
for $p\in [1,\infty)$. The semi-normed vector 
space $\dot{H}^1(\R)$ is defined as the 
subspace of functions in $L^\infty(\R)$ 
having a weak derivative in $L^2(\R)$, with 
semi-norm $\abs{h}_{\dot{H}^1(\R)}
=\norm{\partial_x h}_{L^2(\R)}$.
Note that this ensures that $u$ is 
$\frac12$-H\"older continuous 
in $x$, which is not enough for uniqueness. We 
additionally assume that $u$ satisfies the 
following one-sided gradient bound:
\begin{equation}\label{eq:stoch_oleinik_g}
q(\omega,t,x) := 
		\pd_x u(\omega,t,x) 
		\le K(\omega,t), 
		\quad \text{where $K>0$,}
\end{equation}
and, for some $p>1$, 
\begin{equation}\label{eq:log_divergence2}
		\Ex \exp\bk{p \int_\ep^T K(s)\,\d s}
		\lesssim_{p,T} \ep^{-2p}, 
		\quad \text{for all  $\eps\in (0,1)$}.
\end{equation}
Here we use the notation $h_1\lesssim_{\alpha}h_2$ if $h_1\le C(\alpha) h_2$ for some constant
$C$ that may depend on $\alpha$, and non-negative functions $h_1,h_2$.
Finally, we require a strong temporal 
continuity condition at $t=0$: 
\begin{equation}\label{eq:u_one-sided_continuity}
	\lim_{t \downarrow 0} \Ex 
	\abs{u(t) - u(0)}_{\dot{H}^1(\R)}^2 = 0.
\end{equation}

The conditions imposed on the drift $u$ 
are motivated by the work \cite{MR4151173}, 
in which $u$ solves a nonlinear stochastic transport 
equation, and the one-sided gradient 
bound \eqref{eq:stoch_oleinik_g} acts as a selection 
principle for dissipative weak solutions of this SPDE. 
We will return to the motivation 
behind the key condition \eqref{eq:stoch_oleinik_g} later. 

Regarding the noise coefficient $\sigma$, let 
us discuss the  case of a deterministic $\sigma$ first, cf.~\eqref{eq:X_sde_b}. 
In this case, we assume that 
$\sigma=\sigma(x)$ satisfies 
\begin{align}\label{eq:sigma_bound}
	\sigma \in C^2(\R), 
	\quad \sigma',\sigma'', 
	\bk{\sigma^2}''\in L^\infty(\R).
\end{align} 
For such a $\sigma$, which is necessarily globally 
Lipschitz continuous and of linear growth, the second derivative
$\frac12\bk{\sigma^2}'' = \bk{\sigma'}^2+\sigma \sigma''$ 
is bounded on $\R$. An example ensuring the latter 
is when $\sigma'$, $\sigma''$ are bounded and $\sigma''$ 
is compactly supported on $\R$; then 
$\bk{\sigma^2}''\lesssim 1$. The noise 
coefficient $\sigma$ is allowed to vanish in this work. 

The main contribution of this paper 
is the treatment of the irregular random drift $u$. 
However, it turns out that our methods are 
sufficiently flexible to allow for a wider 
class of random noise coefficients 
$\sigma=\sigma(\omega,t,x)$. The conditions 
defining this class appear somewhat eloborate, but 
any deterministic $\sigma = \sigma(x)$ 
satisfying \eqref{eq:sigma_bound} belongs to this class. 
First, we require that $\sigma=\sigma(\omega,t,x)$ is 
progressively measurable (on $\bS$), and that 
$\sigma$, $\bk{\sigma^2}'$ are globally $x$-Lipschitz 
in the sense that
\begin{equation}
	 \abs{\sigma(\omega,t,x)-\sigma(\omega,t,y)}, 
	\, \abs{\bk{\sigma^2}'(\omega,t,x)
	-\bk{\sigma^2}'(\omega,t,y)} 
	\le \Lambda(\omega,t)\abs{x-y},
	\label{eq:sigma_lip}
\end{equation}
where the progressively measurable 
	process $\Lambda(t)=\Lambda(\omega,t)$
exhibits exponential moments,
\begin{equation}
	\Ex \exp \bk{p\int_0^T \Lambda^2(t)\,\d t}
	<\infty, \,\, \forall p>0.
	\label{eq:sigma_exp}
\end{equation}
The exponential moments \eqref{eq:sigma_exp} are used to 
prove the \textit{existence} of a solution $X$ that 
belongs (locally) to $L^p(\Omega; C([0,T]))$ for any finite $p\ge 1$. 
Dropping the requirement of arbitrary $p$-moments, 
one can relax \eqref{eq:sigma_exp} somewhat. 

By Jensen's inequality, the 
condition \eqref{eq:sigma_exp} implies
\begin{equation}\label{eq:sigma_lipschitz}
	\Ex \int_0^T \Lambda^2(t) \,\d t < \infty,
\end{equation}
which will be used on certain occasions. Finally, we will 
also need the following technical conditions:
\begin{subequations}\label{eq:sigma_alle}
\begin{align}
	&\norm{\bk{\sigma^2}''(0)}_{L^\infty(\Omega \times \R)} 
	< \infty;
	\label{eq:sigma_init}
	\\&
	\bk{\sigma^2}''(t)-\bk{\sigma^2}''(0) 
	\in L^2(\Omega \times \R),
	\quad \text{uniformly on $[0,T]$};
	\label{eq:sigma_L2_incl}
	\\ &
	\lim_{t \downarrow 0}
	\Ex\norm{\bk{\sigma^2}''(t) 
	-\bk{\sigma^2}''(0) }_{L^2(\R)}^2 = 0;
	\label{eq:sigma_one-sided_tcont}
	\\ &
	\Ex \int_0^T \abs{\sigma(t,0)}^p\,\d t < \infty,
	\quad 
	\Ex \int_0^T \abs{\left(\sigma^2\right)'(t,0)}^p 
	\,\d t<\infty, \quad \forall p\in [1,\infty).
	\label{eq:sigma_origin}
\end{align}
\end{subequations}
\begin{rem}
It is possible to consider $\sigma=\sigma(\omega,t,x)$ 
such that $\bk{\sigma^2}''$ satisfies the same conditions
\eqref{eq:stoch_oleinik_g} and \eqref{eq:u_one-sided_continuity}
as $q$. In this case, for our existence result, we 
must additionally assume \eqref{eq:sigma_lipschitz}.
However, these conditions will fail to include 
the linear case $\sigma(x)=a+bx$, 
which originally motivated this study 
(see Section \ref{sec:motivation}).
\end{rem}

Our main result is the following theorem.

\begin{thm}\label{thm:main}
Suppose $u \in L^p(\Omega;L^\infty([0,T];
\dot{H}^1(\R)))$ satisfies 
conditions \eqref{eq:stoch_oleinik_g} 
and \eqref{eq:u_one-sided_continuity}, and 
$\sigma$ satisfies \eqref{eq:sigma_lip}, \eqref{eq:sigma_exp},  
\eqref{eq:sigma_init}--\eqref{eq:sigma_origin}. 
There exists a unique strong solution 
of \eqref{eq:X_sde_a}.
\end{thm}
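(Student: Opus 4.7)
The plan is to establish pathwise uniqueness by isolating the singular behaviour at $t=0$, and then deduce the existence of a strong solution by a standard regularisation-and-tightness argument combined with the Gy\"ongy--Krylov criterion. For existence I would mollify $u$ in the spatial variable to obtain Lipschitz approximants $u^n$, solve the associated classical SDEs to get $\{X^n\}$, derive uniform $L^p(\Omega;C([0,T]))$ estimates via It\^o's formula and the BDG inequality (this is where the arbitrary-$p$ exponential moments~\eqref{eq:sigma_exp} of $\Lambda^2$ are needed), and pass to the limit; the pathwise uniqueness established below guarantees that the limit is a genuine strong solution.

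For uniqueness, let $X_1,X_2$ be two strong solutions with $X_1(0)=X_2(0)=x$, set $Y:=X_1-X_2$, and apply It\^o's formula to $Y^2$. The one-sided bound~\eqref{eq:stoch_oleinik_g} yields the pointwise estimate
\begin{equation*}
Y\cdot \bigl[u(\omega,t,X_1)-u(\omega,t,X_2)\bigr] \;\le\; K(\omega,t)\,Y^2,
\end{equation*}
while the Lipschitz conditions~\eqref{eq:sigma_lip} control $\bigl|(\sigma^2)'(X_1)-(\sigma^2)'(X_2)\bigr|$ and $\bigl|\sigma(X_1)-\sigma(X_2)\bigr|^2$ by $\Lambda|Y|$ and $\Lambda^2Y^2$, respectively. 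Altogether,
\begin{equation*}
    dY^2 \;\le\; \bigl(2K + \tfrac12\Lambda + \Lambda^2\bigr)\,Y^2\,dt \;+\; dM_t,
\end{equation*}
with $M$ a local martingale. On $[\eps,T]$, a stochastic Gr\"onwall inequality of Scheutzow type combined with H\"older's inequality at conjugate exponents $(p,q)$ yields
\begin{equation*}
    \Ex\,|Y(T)|^{2/q} \;\lesssim\;
    \Bigl(\Ex\,\exp\Bigl(p\!\int_\eps^T(2K+\tfrac12\Lambda+\Lambda^2)\,ds\Bigr)\Bigr)^{1/p}
    \cdot \bigl(\Ex\,|Y(\eps)|^{2}\bigr)^{1/q}.
\end{equation*}
By~\eqref{eq:log_divergence2} and~\eqref{eq:sigma_exp}, the exponential factor is $\lesssim \eps^{-2}$, so uniqueness on $[0,T]$ will follow once one shows that $\Ex|Y(\eps)|^{2}$ decays faster than a suitable power of $\eps$ as $\eps\downarrow 0$.

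Establishing this small-time smallness is the main obstacle. Here I exploit the temporal continuity~\eqref{eq:u_one-sided_continuity} at $t=0$ together with the embedding $\dot{H}^1(\R)\hookrightarrow C^{1/2}(\R)$ to compare the drift at time $t$ to the initial random profile $u(\omega,0,\cdot)$. Splitting
\begin{equation*}
    u(t,X_i(t)) - u(t,x) = \bigl[u(t,X_i(t))-u(0,X_i(t))\bigr] + \bigl[u(0,X_i(t))-u(0,x)\bigr],
\end{equation*}
the first bracket is small in $L^2(\Omega;L^\infty_x)$ by~\eqref{eq:u_one-sided_continuity}, while the second is dominated pathwise by $|u(0)|_{\dot{H}^1}\,|X_i(t)-x|^{1/2}$. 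Combined with standard BDG moment estimates $\Ex|X_i(\eps)-x|^{r}\lesssim \eps^{r/2}$ (valid for all $r\ge 1$ by virtue of~\eqref{eq:sigma_exp} and~\eqref{eq:sigma_origin}) and a Bihari-type iteration, this gives a decay of $\Ex|Y(\eps)|^{2}$ at an order of $\eps$ that exceeds $2q$ whenever $p>1$ in~\eqref{eq:log_divergence2} is chosen sufficiently close to $1$; the slack afforded by the strict inequality $p>1$ is precisely what enables this balancing. Letting $\eps\downarrow 0$ then forces $Y\equiv 0$ on $[0,T]$, completing the proof of uniqueness and, via Gy\"ongy--Krylov, of the full theorem.
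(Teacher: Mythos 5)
Your overall strategy---localise via a small-time estimate, then use a stochastic Gr\"onwall inequality on $[\eps,T]$ and send $\eps\downarrow 0$---is the same strategy as the paper. However, the central small-time estimate, which is the heart of the matter, has several genuine gaps in your version.

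First, the inequality you write after applying ``Scheutzow-type'' Gr\"onwall plus H\"older,
\begin{equation*}
\Ex\,|Y(T)|^{2/q}\;\lesssim\;
\Bigl(\Ex\exp\bigl(p\textstyle\int_\eps^T(\cdots)\bigr)\Bigr)^{1/p}
\bigl(\Ex|Y(\eps)|^2\bigr)^{1/q},
\end{equation*}
is not what the stochastic Gr\"onwall lemma of Scheutzow and Xie--Zhang (Lemma~\ref{lem:stochastic_gronwall}) yields. In that lemma the initial datum enters \emph{linearly} as $\Ex\bigl(\xi(0)+\int_0^t\eta\bigr)$, not raised to a conjugate power $1/q$. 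The admissible exponents are $0<p<r<1$ for the process $\xi$ and the H\"older index $r$ for the exponential factor, and you cannot decouple them into an arbitrary conjugate pair as H\"older's inequality for ordinary integrals would allow. Consequently the whole numerology ``decay rate $>2q$ with $p$ close to $1$'' does not rest on a valid inequality; moreover the direction is wrong, since taking $p\downarrow 1$ in \eqref{eq:log_divergence2} drives $q\uparrow\infty$ and makes the required decay rate \emph{larger}, not smaller.

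Second, your mechanism for producing the required decay is too weak. The BDG-type estimate $\Ex|X_i(\eps)-x|^r\lesssim\eps^{r/2}$ gives only $\Ex|Y(\eps)|^2\lesssim\eps$ via the triangle inequality, which is nowhere near the $\eps^{2+\delta}$ (let alone $\eps^{2q+\delta}$) needed to beat the $\eps^{-2}$ from the exponential moment bound. The key point, which your proposal misses, is that $X_1$ and $X_2$ have the \emph{same} drift $u$, and the drift increment $u(t,X_1)-u(t,X_2)=\int_{X_1}^{X_2}q(t,y)\,\d y$ can be bounded by Cauchy--Schwarz as $|Y|^{1/2}\norm{q(t)}_{L^2(\Delta_t)}$ \emph{without using $K$ at all}. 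This is what the paper exploits: applying the Tanaka formula to $|Y|$ (so that the local-time term vanishes because $\sigma$ is Lipschitz), taking expectations, using Cauchy--Schwarz again on $\Ex\bigl[Y^{1/2}\norm{q}_{L^2(\Delta)}\bigr]$, and then dividing by $\sup_{[0,\eps]}(\Ex Y)^{1/2}$ yields
\begin{equation*}
\sup_{t\in[0,\eps]}\Ex\,Y(t)\;\lesssim\;
\eps^2\,\sup_{t\in[0,\eps]}\Ex\norm{q(t)}_{L^2(\Delta_t)}^2.
\end{equation*}
The factor $\eps^2$ arises from this quadratic self-improvement, not from BDG. The temporal continuity \eqref{eq:u_one-sided_continuity} is then used only to show that $\sup_{t\in[0,\eps]}\Ex\norm{q(t)}_{L^2(\Delta_t)}^2\to 0$, by splitting $q(t)=\bigl(q(t)-q(0)\bigr)+q(0)$ and applying dominated convergence for the second piece (using that $|\Delta_t|=Y(t)\to 0$). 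Your claim that the first bracket of your decomposition is ``small in $L^2(\Omega;L^\infty_x)$ by \eqref{eq:u_one-sided_continuity}'' is also unjustified: the $\dot H^1$ semi-norm does not control the $L^\infty$ norm (constants have vanishing semi-norm), so $\Ex|u(t)-u(0)|^2_{\dot H^1}\to 0$ does not yield $L^\infty$-continuity at $t=0$.

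Third, a structural remark: applying It\^o to $Y^2$ rather than Tanaka to $|Y|$ makes the short-time step worse, not better. In $\d Y^2$ the drift increment appears as $2Y\int_{X_1}^{X_2}q\,\d y\lesssim Y^{3/2}\norm{q}_{L^2(\Delta)}$, and there is no clean way to close a Gr\"onwall loop in terms of $\Ex Y^2$ alone; you would be forced to fall back on the pointwise bound $\le 2KY^2$, which is exactly what is singular near $t=0$ and must be avoided there. The paper's first-moment estimate avoids this.

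On existence, mollifying $u$ in $x$ and passing to the limit via Gy\"ongy--Krylov is plausible in outline and genuinely different from the paper (which truncates $q=\pd_x u$ from above at level $R$ and invokes Krylov's random-coefficient theorem \cite{MR1731794}; one-sided truncation preserves the one-sided bound \eqref{eq:stoch_oleinik_g} exactly, which is what Krylov's monotonicity condition requires). However, you defer the Cauchy/tightness estimate for the approximating sequence, and that estimate will face the same small-time singularity that your uniqueness argument does not resolve, so the existence part inherits the gap.
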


The central part of Theorem \ref{thm:main} is 
the uniqueness assertion (cf.~Theorem~\ref{thm:pathwise_uniqueness}). 
We prove pathwise uniqueness by a careful estimation 
of the difference between two solutions, making essential 
use of the Tanaka formula, the exponential moment bound \eqref{eq:stoch_oleinik_g}, and 
a recent stochastic Gronwall inequality \cite{MR4058986,MR1799274} 
(see Lemma \ref{lem:stochastic_gronwall} below). 
The exponential bound \eqref{eq:stoch_oleinik_g}, along 
with \eqref{eq:u_one-sided_continuity}, 
allows us to control the difference between the two 
solutions for short times $t\le \eps$ ($\eps \ll 1$), 
which is the main challenge in demonstrating pathwise uniqueness. 
When $\sigma\equiv 0$, our uniqueness result 
recovers \cite[Prop.~A]{MR1799274}. 
The detailed proof reported in 
Section \ref{sec:pathwise_unique} 
can be viewed as a surprisingly non-trivial stochastic 
extension of the ODE proof in \cite{MR1799274}.
 
In Section \ref{sec:existence}, we demonstrate 
existence of strong solutions to the SDE \eqref{eq:X_sde_a} 
(cf.~Theorem~\ref{thm:existence}). We 
approximate \eqref{eq:X_sde_a} using ``one-sided 
truncations" $\{u_R\}$ of the drift $u$, and then make use of 
Krylov's theorem \cite{MR1731794} for SDEs with random 
coefficients to solve \eqref{eq:X_sde_a} 
with $u=u_R$. This produces a family of solutions 
$\{X_R\}$, indexed by the truncation level $R$ with $R\to \infty$. 
We show that $\{X_R\}$ constitutes a Cauchy sequence in the space 
$L^{1/2}(\Omega;C([0,T]))$, with metric 
$d(X_1,X_2) := \Ex \sup_{t \in [0,T]} 
\abs{X_1(t)-X_2(t)}^{1/2}$ 
(cf.~Proposition \ref{prop:cauchy_seq}) \cite[4.7.62]{Bog2007}. 
The proof of this result proceeds along 
the lines of the uniqueness argument.
The Cauchy property, along with \eqref{eq:sigma_exp} and 
$R$-independent $p$-moments of $X_R$ 
(cf.~Lemma \ref{lem:X_R_pmoment}), implies the 
existence of a limit $X\in L^2(\Omega; C([0,T])$ 
such that $X_R\to X$ in $L^2(\Omega; C([0,T])$. 
It is straightforward to deduce that $X$ 
is a solution of \eqref{eq:X_sde_a} 
(cf.~Theorem \ref{thm:existence}).
  
Before discussing the literature on SDEs 
with irregular drift and the motivation behind 
our particular class of drift coefficients $u$, let us 
supply a relevant example of the process 
$K$ arising in \eqref{eq:stoch_oleinik_g}.

\begin{rem}\label{rem:example}
Consider the SDE \eqref{eq:X_sde_b} with 
deterministic $\sigma = \sigma(x)$ 
satisfying \eqref{eq:sigma_bound}. 
According to  Section \ref{sec:motivation} below, 
it makes sense to impose the condition
\begin{align}\label{eq:stoch_oleinik}
	q(\omega,t,x)= 
	\pd_x u(\omega,t,x) 
	\le  C+\frac{e^{-\norm{\sigma'}_{L^\infty} W(t)}}
	{\frac12\int_0^t e^{-\norm{\sigma'}_{L^\infty}W(s)}\,\d s},
\end{align}
where $C\ge 0$ is a constant. Let us verify 
that $q$ satisfies \eqref{eq:stoch_oleinik_g}. 
With
$$
K(\omega, t) := C+\frac{e^{-\norm{\sigma'}_{L^\infty} W(t)}}
{\frac12\int_0^t e^{-\norm{\sigma'}_{L^\infty} W(s)}\,\d s} 
= C+2\frac{\d}{\d t} \log\bk{\frac12\int_0^t 
e^{-\norm{\sigma'}_{L^\infty}W(s)}\,\d s},
$$
we find that $I(\eps):=\Ex \exp\left( p 
\int_\ep^T K(\omega,s)\,\d s\right)$ satisfies
$$
I(\eps)=e^{pC(T - \ep)}\Ex\bk{\frac{\int_0^T 
\exp\bk{-\norm{\sigma'}_{L^\infty} W(s)}\,\d s}
{\int_0^\ep \exp\bk{-\norm{\sigma'}_{L^\infty}W(s)}
\,\d s}}^{2p}.
$$
By the Cauchy--Schwarz inequality, 
$I(\eps)\lesssim_{T,p} (I_-)^{1/2} 
(I_+)^{1/2}$, 
where
\begin{align*}
	&I_+:= \Ex\left(\int_0^T 
	\exp\bigl(-\norm{\sigma'}_{L^\infty} W(s)\bigr) 
	\,\d s\right)^{4p},
	\\
	& I_-:= \Ex\left(\int_0^{\eps} 	
	\exp\bigl(-\norm{\sigma'}_{L^\infty} W(s)\bigr)
	\,\d s\right)^{-4p}.
\end{align*}
We estimate $I_-$ as follows:
\begin{align*}
	I_- & \le  \Ex \bk{\ep \min_{s \in [0,\ep]}
	\exp\bk{-\norm{\sigma'}_{L^\infty}W(s)}}^{-4p}
	\\ & \le  \Ex \bk{\ep \exp\bk{-\norm{\sigma'}_{L^\infty} 
	\max_{s \in [0,\ep]} W(s)}}^{-4p}
	\\ & = \frac{2}{\sqrt{2 \pi \ep}}
	\int_0^\infty  \ep^{-4p} 
	\exp\big(4p \norm{\sigma'}_{L^\infty} x 
	-\frac{x^2}{2\ep}\big)\,\d x
	\lesssim_{p,\sigma} \ep^{-4p},
\end{align*}
where we have used that the law on $[0,\infty)$ of 
$\max_{s \in [0,\ep]} W(s)$ is equivalent to the law 
of $\abs{W(\ep)}$ \cite[Prop.~III.3.7]{MR1303781}, 
for which we have
$$
\mathbb{P}\bigl(\abs{W(t)} \in \d x\bigr) 
= \frac{2}{\sqrt{2 \pi t}} 
e^{-x^2/(2t)}\,\d x.
$$ 
Similarly, $I_+\lesssim_{\sigma,T,p} 1$. 
Hence $I(\eps) \lesssim_{p,T,\sigma} \ep^{-2p}$ (for all 
finite $p$), i.e., \eqref{eq:stoch_oleinik_g} holds.
\end{rem}

\subsection{Background}
Let us contextualise our result by discussing 
some previous studies on the well-posedness of SDEs. 
There is a very rich literature studying the existence 
and uniqueness of solutions, which begins 
with It\^{o}'s work on SDEs with globally Lipschitz 
coefficients (see \cite[Chap.~IX]{MR1303781}).
Often the Lipschitz condition is too strong. 
While weak existence is relatively 
easy to obtain for non-smooth coefficients 
(via, say, Girsanov's theorem), the construction 
of strong solutions is a more delicate matter. 
Strong solutions of SDEs with rough deterministic 
coefficients have been studied by many authors, 
beginning with \cite{MR0336813,Veretennikov:1980ut}, and later 
\cite{MR1392450,MR1864041,MR2117951,MR2810591}, 
to mention just a few examples. Most of these works use 
the Fokker--Planck PDE associated with the SDE, the Krylov 
estimate, and the Zvonkin transformation, which require the 
noise coefficient to be non-degenerate (uniformly elliptic). 
As a consequence, the results hold under 
very weak conditions on the drift, much weaker 
than in deterministic ODEs. For recent work 
on the well-posedness of SDEs with (Sobolev) rough coefficients 
and degenerate noise, see \cite{MR3785594}. 
A probabilistic approach based on Malliavin 
calculus (nondegenerate noise) is developed 
in \cite{MR2368369,MR3096525}.
Most of the cited articles assume additive noise.
The works \cite{MR2820071,MR2172887} 
consider multiplicative noise  under 
non-degeneracy and Sobolev regularity conditions on 
the noise coefficient. For a detailed study of 
one-dimensional SDEs, see the book \cite{MR2112227}.

The influential paper \cite{MR2593276} studied stochastic 
regularisation in linear transport SPDEs 
with non-smooth velocity $b$, for 
which the \textit{characteristic equation} is
\begin{align}\label{eq:FGP_SDE}
	\d X = b(t,X)\,\d t + \d W.
\end{align}
Using the It\^o--Tanaka trick and solution regularity 
of the associated Fokker--Planck 
equation (a backward parabolic equation), they establish 
uniqueness of solutions to stochastic transport equations 
under a regularity condition on $b$ that is weaker 
than in the DiPerna--Lions--Ambrosio 
theory of deterministic transport equations.
To do so they prove the existence and uniqueness of 
solutions to the SDE \eqref{eq:FGP_SDE} with minimal regularity 
assumptions on $b$ using the short-time smooth 
flow of the associated backward parabolic equation.  
In \cite[Sec.~6.2]{MR2593276} they give 
negative examples showing that their results 
do not hold for equations with random drift $b$, a typical 
example of which is $b=b(\omega,t,x)=\abs{x-W(t)}^{1/2}\wedge 1$.
Whilst this $b$ is locally in $\dot{H}^1(\R)$, 
$\pd_x b$ does not satisfy a one-sided bound 
of the form \eqref{eq:stoch_oleinik_g}. Motivated by \cite{MR2593276}, 
there were many additional works studying strong solutions 
of SDEs like \eqref{eq:FGP_SDE} with non-smooth 
drift $b$, but almost all of them assume 
that $b$ is deterministic.

Let us turn our attention to SDEs with random coefficients.
In \cite{MR1731794}, Krylov established the existence 
and uniqueness of strong solutions to 
\begin{equation}\label{eq:random-SDE}
\d X = b(\omega,t,X) \,\d t 
+ \sigma(\omega,t,X) \,\d W,
\end{equation}
under some boundedness, monotonicity, and coercivity 
conditions on the random coefficients $b$ and $\sigma$. 
His proof is based on a detailed convergence analysis of the 
Euler discretization scheme. We state
Krylov's result as Theorem \ref{thm:krylov_existence} 
below, and use it in Section \ref{sec:existence}
as a part of the existence proof. Because of an 
indispensable ``logarithmic divergence" at $t=0$, 
Krylov's theorem does not apply to the SDE \eqref{eq:X_sde_a} 
with $u$ satisfying the one-sided 
gradient bound \eqref{eq:stoch_oleinik_g}. 

With a random drift $b$ and $\sigma\equiv 1$ 
in \eqref{eq:random-SDE}, the work \cite{MR3565418} 
partially recovered the results of \cite{MR2593276} 
under an additional condition of Malliavin differentiability 
of $b$. The proof employed a Girsanov transformation 
idea \cite{MR0336813}, which 
extends the It\^o--Tanaka trick in \cite{MR2593276}, by 
considering a backward parabolic SPDE instead 
of the Fokker--Planck PDE associated with $X$ 
for a deterministic $b$. We also refer 
to \cite{MR4021273} for a related 
result, which allows for the drift 
$b(\omega,t,x)=b_1(t,x)+b_2(\omega,t,x)$, 
where the deterministic part $b_1$ is measurable 
and of linear growth. 
In contrast, the random part $b_2$ is sufficiently smooth 
in $t,x$ and Malliavin differentiable in $\omega$.
These results were extended and sharpened 
in \cite{Zha2020} to the SDE \eqref{eq:random-SDE} 
with non-degenerate noise and random coefficients 
$b$ and $\sigma$ satisfying similar $(t,x)$-regularity 
and Malliavin differentiability conditions. 
An illustrative example of random drift $b$ covered 
by these recent works is $b(\omega,t,x)=f(t,x,W(t))$ 
for a function $f$ that is Lipschitz continuous 
in the last variable. The works 
\cite{MR3565418,MR4021273,Zha2020} 
cannot handle the SDE \eqref{eq:random-SDE} with random drift 
$u\in L^p(\Omega; L^\infty([0,T];\dot{H}^1(\R)))$ satisfying
\eqref{eq:stoch_oleinik_g} and \eqref{eq:u_one-sided_continuity}, 
even if we were to assume that $\sigma(\cdot)>0$. The proof 
of our Theorem~\ref{thm:main} will 
not use ideas based on the associated backward SPDE, nor will 
we impose non-degeneracy or Malliavin 
differentiability conditions on our coefficients.

\subsection{Motivation}\label{sec:motivation}
We conclude this introduction with a brief motivation of 
the current study, which stems from our ongoing 
investigation into the uniqueness and dissipation properties 
of solutions to the stochastic 
Hunter--Saxton equation \cite{MR4151173}
\begin{align}\label{eq:HS1}
	\d q + \pd_x \left(u q\right) \,\d t 
	- \frac12 q^2 \, \d t 
	+ \pd_x \left(\sigma q \right) \circ \d W=0, 
	\qquad \pd_x u =q.
\end{align}
Existence results, along with a specific distribution for 
wave-breaking (finite-time blowup and continuation), 
were derived for the nonlinear transport-type SPDE \eqref{eq:HS1} 
in \cite{MR4151173}. These results were derived under the 
condition that $\sigma$ is linear. 
Solutions to \eqref{eq:HS1} were constructed 
from its characteristic equation, namely the SDE 
\eqref{eq:X_sde_a}. 

Using the It\^o--Wentzell 
theorem and the characteristic equation \eqref{eq:X_sde_b}, 
the following Lagrangian formulation of \eqref{eq:HS1} 
can be postulated:
\begin{align}\label{eq:lagrangian_HS}
	\d \mathfrak{Q} = -\frac{1}{2} \mathfrak{Q}^2 \,\d t
	-\sigma' \mathfrak{Q}\circ \d W, 
	\qquad \mathfrak{Q}(0) = q(0,x).
\end{align}
This SDE can be solved exactly as a 
\textit{stochastic Verhulst equation}. 
The solution is
$$
\mathfrak{Q}(t,x) = 
\frac{e^{-\sigma' W(t)}}{\frac1{q(0,x)} 
+\frac12\int_0^t e^{-\sigma' W(s)}\,\d s}.
$$

In \cite{MR4151173}, we constructed the drift $u$ 
directly in such a way that it was obvious 
that \eqref{eq:X_sde_a} was well-posed, and 
$\mathfrak{Q}(t,x)=\pd_x u(t,X(t,x))$ 
solved \eqref{eq:lagrangian_HS}, providing 
us with a way to construct solutions to 
the stochastic Hunter--Saxton equation \eqref{eq:HS1} 
along characteristics. The solution to the 
SDE \eqref{eq:lagrangian_HS} identifies the 
dissipative solution of the SPDE \eqref{eq:HS1} 
with an Ole{\u\i}nik-type (one-sided gradient) 
bound. This motivates our study of the SDE \eqref{eq:X_sde_a}
with random drift $u$ satisfying \eqref{eq:stoch_oleinik}, 
and thus \eqref{eq:stoch_oleinik_g}.

In an ongoing work, we study the uniqueness 
question for the stochastic Hunter--Saxton 
equation \eqref{eq:HS1}. In that work, starting from a 
solution to the SPDE \eqref{eq:HS1}, 
we must derive properties of the solution to 
the characteristic equation \eqref{eq:X_sde_b}. 
The well-posedness theorem in the present paper, which 
we believe is of independent interest, is needed 
as a part of that endeavour.

\begin{rem}
Finally, we present an example of 
a random drift $u$ motivated 
by \eqref{eq:HS1}, cf.~\cite{MR4151173}. 
Fixing a number $c\in\R$, let $Z_1(t)$ 
be the unique solution to 
$$
Z_1(t)=\frac{c^2}2 \int_0^t Z_1(s)\,\d s
+\int_0^t c \, Z_1(s) \,\d W.
$$
Fixing a number $v_0>0$, we introduce
$$
Z_2(t)=Z_1(t)+\exp\left(c W(t)+\int_0^t 
\frac{\exp\bigl(-c W(s)\bigr)}{-v_0+\frac12 
\int_0^s \exp\bigl(-c W(r)\bigr)\,\d r}\,\d s\right).
$$
Finally, we set
$$
Z_3(t) = \bigl(Z_2(t) - Z_1(t)\bigr)
\frac{\exp\bigl(-c W(t)\bigr)}{-v_0+
\frac12 \int_0^t \exp\bigl(-c W(s)\bigr)\,\d s}. 
$$
Denote by $T^\star=T^\star(\omega)$ the 
(blow-up) time for which
$$
\lim_{t\uparrow T^\star}
\int_0^t \exp\bigl(-c W(s)\bigr)\,\d s
=2v_0.
$$
Now we define the adapted and continuous 
drift coefficient $u$ by
\begin{align*}
	u(\omega,t,x)=
	\frac{x - Z_1(t)}{Z_2(t) - Z_1(t)} Z_3(t) 
	\mathds{1}_{\left[0,T^\star\right)
	\times \left[Z_1(t),Z_2(t)\right)}
	+ Z_3(t) \mathds{1}_{\left[0,T^\star\right)
	\times \left[Z_2(t),\infty\right)}.
\end{align*}
Clearly, the gradient
$$
\pd_x u(t)= 
\frac{\exp\bigl(-c W(t)\bigr)}{-v_0+
\frac12 \int_0^t \exp\bigl(-c W(s)\bigr)\,\d s}
\mathds{1}_{\left[0,T^\star\right)
\times \left[Z_1(t),Z_2(t)\right)}
$$
blows up ($\pd_x u \to -\infty$ while $\abs{u}$ 
remains bounded) as $t\uparrow T^\star$ but 
evidently \eqref{eq:stoch_oleinik}, and thus 
\eqref{eq:stoch_oleinik_g}, holds. Besides,  
$\pd_x u \in L^p(\Omega; L^\infty([0,T]; \dot{H}^1(\R)))$ 
for all $p\ge 1$, and one can easily check that 
$u$ obeys \eqref{eq:u_one-sided_continuity}. 
Note that $u(t)\equiv 0$ for all $t>T^\star$, which 
corresponds to a dissipative solution of 
the stochastic Hunter--Saxton equation \eqref{eq:HS1}.
\end{rem}

\section{Pathwise uniqueness}\label{sec:pathwise_unique}
In this section, we prove the uniqueness part 
of Theorem \ref{thm:main}. We make essential
use of the stochastic Gronwall inequality 
established recently by Scheutzow \cite{MR3078830}. 
The proof in \cite{MR3078830} relies on a martingale 
inequality of Burkholder that holds for continuous martingales. 
Below we recall a mild refinement due to Xie and Zhang
 \cite[Lemma~3.8]{MR4058986} which holds for general 
discontinuous martingales. The stochastic Gronwall 
lemma provides an upper bound for 
the $p$th moment of a process $\xi$ that does not 
depend on the martingale part $M$ of the inequality. 
It is this convenient ``martingale uniformity" 
that forces $p\in (0,1)$.

\begin{lem}[\cite{MR4058986}]\label{lem:stochastic_gronwall}
Fix a stochastic basis $\bS$. Let $\xi(t)$ 
and $\eta(t)$ be non-negative adapted processes, 
$A(t)$ be a non-decreasing adapted process 
starting at $A(0) =0$, 
and $M$ be a local martingale with $M(0) = 0$. 
Suppose $\xi$ is c\`adl\`ag in time and satisfies 
the following pathwise differential inequality:
$$
\d \xi \le \eta \,\d t + \xi\,\d A + \d M 
\quad \text{on $[0,T]$}.
$$
For any $0 < p < r < 1$ and $t \in [0,T]$, 
\begin{align*}
	\bk{\Ex \sup_{s \in [0,t]} 
	\xi^p(s)}^{1/p} \le 
	C_{p,r} \bk{\Ex 
	\exp\bk{\frac{r}{1-r}A(t)}}^{(1 - r)/r} 
	\Ex \bk{\xi(0) + \int_0^t \eta(s) \,\d s},
\end{align*}
where $C_{p,r}=\bk{\frac{r}{r - p}}^{1/p}$. 
\end{lem}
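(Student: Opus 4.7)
My plan is to remove the $\xi\,\d A$ term in the hypothesised differential inequality via an exponential integrating factor, thereby dominating $\xi$ by a deterministic-in-$A$ factor times the sum of a non-decreasing adapted process and a local martingale. The $L^p$-bound is then extracted by combining H\"older's inequality (to isolate the exponential moment of $A$) with a Lenglart-type domination inequality (to handle the local martingale without any integrability assumption on $M$).

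First, set $Y(t) := e^{-A(t)}\xi(t) \ge 0$. The It\^o product rule for c\`adl\`ag semimartingales, combined with the hypothesised inequality and the favourable sign of the $A$-jumps (since $A$ is non-decreasing and $\xi\ge 0$), gives
\begin{equation*}
	\d Y(t) \le e^{-A(t)}\eta(t)\,\d t + e^{-A(t)}\,\d M(t).
\end{equation*}
Integrating and using $e^{-A}\le 1$ yields $Y(t) \le \Upsilon(t) + N(t)$, where $\Upsilon(t) := \xi(0) + \int_0^t \eta(s)\,\d s$ is non-decreasing and adapted, and $N(t) := \int_0^t e^{-A(s)}\,\d M(s)$ is a local martingale with $N(0)=0$. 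Since $A$ is non-decreasing and $Y\ge 0$, one obtains $\sup_{s\in[0,t]}\xi(s) \le e^{A(t)}\sup_{s\in[0,t]}Y(s)$.

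Next, raise to the $p$-th power and apply H\"older's inequality with conjugate exponents chosen so that the $e^A$ factor contributes $\bk{\Ex\exp(rA(t)/(1-r))}^{p(1-r)/r}$ on the outside. This leaves a residual $L^q$-norm of $\sup_s Y(s)$ with some $q<1$, and the constraint $p<r$ is precisely what forces $q<1$. To bound the residual, I would invoke Lenglart's domination inequality: after localising $M$ by stopping times $\tau_n\uparrow\infty$ on which $N$ is a genuine martingale, optional stopping gives $\Ex Y(\tau\wedge\tau_n) \le \Ex\Upsilon(\tau)$, and Fatou upgrades this to $\Ex Y(\tau)\le\Ex\Upsilon(\tau)$ at every bounded stopping time $\tau$. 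Lenglart's inequality then bounds $\Ex\bk{\sup_s Y(s)}^q$ by a constant times $\Ex\Upsilon(t)^q$, and Jensen's inequality (concavity of $x^q$ for $q<1$) replaces the latter by $\bk{\Ex\Upsilon(t)}^q$. Recombining and extracting $p$-th roots produces the stated bound; the constant $C_{p,r}=(r/(r-p))^{1/p}$ arises from optimising the H\"older and Lenglart factors.

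The main obstacle, and the reason the exponent is restricted to $(0,1)$, is the control of $\sup|N|$ with no integrability hypothesis on $M$: Doob's $L^p$ inequality needs a true martingale and $p\ge 1$, and BDG additionally requires control of the quadratic variation. Lenglart's inequality sidesteps both issues at the price of restricting the exponent to $(0,1)$. The ``mild refinement'' of Xie--Zhang over Scheutzow's original argument is precisely that Lenglart's inequality---and all the surrounding steps---are agnostic to jumps of $M$, so the proof transfers without modification from continuous to general c\`adl\`ag local martingales.
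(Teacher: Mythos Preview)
The paper does not prove this lemma; it is quoted verbatim from Xie--Zhang \cite[Lemma~3.8]{MR4058986} (itself a refinement of Scheutzow \cite{MR3078830}) and used as a black box in Sections~\ref{sec:pathwise_unique} and~\ref{sec:existence}. There is therefore no in-paper proof to compare your proposal against.

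That said, your sketch is the standard argument and is essentially correct: the integrating-factor reduction $Y=e^{-A}\xi$, followed by H\"older to split off the $e^{A}$ factor and a Lenglart-type domination to control $\sup_s Y(s)$ in $L^q$ for $q<1$, is exactly the route taken in the cited works. Your diagnosis of why the exponent is confined to $(0,1)$---Lenglart requires it, and no integrability on $M$ is assumed---is accurate, as is your remark that the passage from Scheutzow to Xie--Zhang consists in replacing a continuous-martingale inequality by one that is insensitive to jumps. Two small points worth tightening if you write this out in full: (i) when $A$ has jumps the product-rule step needs the observation that the jump contribution $(e^{-A(s)}-e^{-A(s-)})\xi(s-)\le 0$ has the favourable sign, which you allude to but do not make explicit; (ii) Lenglart's inequality as usually stated asks that the dominating process be predictable---here $\Upsilon(t)=\xi(0)+\int_0^t\eta\,\d s$ is continuous and adapted, hence predictable, so there is no issue.
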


We are now in a position to prove the following result.

\begin{thm}[Pathwise uniqueness]\label{thm:pathwise_uniqueness}
Suppose $u\in L^p(\Omega;L^\infty([0,T];\dot{H}^1(\R)))$ 
satisfies conditions \eqref{eq:stoch_oleinik_g} 
and \eqref{eq:u_one-sided_continuity}, and $\sigma$ 
satisfies \eqref{eq:sigma_lip}, 
\eqref{eq:sigma_lipschitz}--\eqref{eq:sigma_one-sided_tcont}. 
Let $X_1$ and $X_2$ be two (strong) solutions of 
the SDE \eqref{eq:X_sde_a} on $[0,T]$, 
with $T>0$ finite. Uniqueness holds in the following sense:
\begin{equation}\label{eq:pathwise-uniq}
	\Ex \sup_{t \in [0,T]} 
	\abs{X_2(t) - X_1(t)}^{1/2}=0.
\end{equation}
Consequently, 
$\Px\Bigl(\bigl\{\omega\in \Omega : 
X_1(\omega,t)=X_2(\omega,t)
\, \, \forall t\in [0,T]\bigr\} \Bigr)=1$,  
i.e., $X_1$ and $X_2$ are indistinguishable. 
\end{thm}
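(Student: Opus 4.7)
Let $Y(t):=\abs{X_2(t)-X_1(t)}$, so $Y(0)=0$. The plan is to derive a stochastic differential inequality for $Y$ by applying the Tanaka formula, approximated smoothly in a Yamada--Watanabe style so that the local-time correction is eliminated in the limit (the Lipschitz-in-$x$ regularity of $\sigma$ from \eqref{eq:sigma_lip} is precisely what enables this elimination), and then to close the estimate with Lemma \ref{lem:stochastic_gronwall} at exponent $p=1/2$. The resulting inequality has the form
\[
Y(t)\le \int_0^t \sgn(X_2-X_1)\Bigl[u(s,X_2)-u(s,X_1)+\tfrac14\bigl((\sigma^2)'(s,X_2)-(\sigma^2)'(s,X_1)\bigr)\Bigr]\d s + M(t),
\]
where $M$ is a continuous local martingale with $\d\langle M\rangle \le \Lambda^2 Y^2\,\d s$. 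The $(\sigma^2)'$-drift is absorbed into a coefficient $\tfrac{\Lambda}{4}Y$ via \eqref{eq:sigma_lip}.

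For the $u$-drift, two competing bounds are available. The one-sided gradient bound \eqref{eq:stoch_oleinik_g}, combined with the identity $u(X_2)-u(X_1)=\int_{X_1}^{X_2}\pd_x u\,\d y$, yields the linear estimate $\sgn(\cdot)(u(X_2)-u(X_1))\le K(s)\,Y$; the Morrey embedding $\dot H^1(\R)\hookrightarrow C^{1/2}$ yields the sub-linear estimate $\abs{u(X_2)-u(X_1)}\le \abs{u(s)}_{\dot H^1}Y^{1/2}$. Neither bound alone suffices: the first requires the coefficient $\int_0^t K\,\d s$ to have exponential moments, which may fail as $s\downarrow 0$; the second, via a Young inequality, produces a forcing proportional to $\abs{u(s)}_{\dot H^1}^2$, which is integrable but not small enough in $\eps$. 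The strategy is therefore to split $[0,T]=[0,\eps]\cup[\eps,T]$ and employ the Hölder bound on $[0,\eps]$ and the one-sided bound on $[\eps,T]$, decomposing further $u(s)=u(0)+[u(s)-u(0)]$ on the short window so that the temporal continuity \eqref{eq:u_one-sided_continuity} annihilates the dangerous contribution. A Young inequality then casts the differential inequality in the form
\[
Y(t)\le \int_0^t \eta_\eps(s)\,\d s + \int_0^t \bigl[K(s)\mathds{1}_{s>\eps}+b(s)\bigr]Y(s)\,\d s + M(t),
\]
with $b(s)$ carrying uniform-in-$\eps$ exponential moments (by \eqref{eq:sigma_exp} or Jensen) and $\eta_\eps$ supported on $[0,\eps]$.

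Lemma \ref{lem:stochastic_gronwall} with $p=1/2$ and $r=p_0/(p_0+1)$, where $p_0>1$ is the exponent in \eqref{eq:log_divergence2}, then gives
\[
\bigl(\Ex\sup_{t\in[0,T]}Y^{1/2}(t)\bigr)^2 \lesssim \eps^{-2}\,\Ex\int_0^\eps \eta_\eps(s)\,\d s,
\]
after separating the $K$-exponential factor from the $\Lambda$-factor by Hölder's inequality and invoking \eqref{eq:log_divergence2} with a slight margin in $p_0$. Sending $\eps\downarrow 0$ yields \eqref{eq:pathwise-uniq}, and indistinguishability follows at once from the $\Px$-almost sure continuity of $X_1$ and $X_2$.

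\emph{The main obstacle} is the final $\eps\downarrow 0$ balance: the $\eps^{-2}$ blow-up from the exponential moment bound on $K$ must be dominated by the smallness of $\Ex\int_0^\eps \eta_\eps$. This is the point at which \eqref{eq:u_one-sided_continuity} must be exploited \emph{in combination with} the one-sided bound—neither ingredient suffices on its own—and where the decomposition of $u$ into a frozen $u(0)$ part and a temporal-increment part has to be arranged with care (in particular, using the $L^p(\Omega;L^\infty\dot H^1)$ moments of $u$ to dispatch the $u(0)$ piece). It is precisely this delicate interplay that elevates the proof from a direct translation to a nontrivial stochastic extension of the ODE argument in \cite{MR1799274}.
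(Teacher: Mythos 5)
Your overall architecture tracks the paper's: Tanaka formula to pass to $Y=\abs{X_2-X_1}$ (local time killed by the Lipschitz structure of $\sigma$), a split of $[0,T]$ into $[0,\ep]\cup[\ep,T]$, the H\"older/Sobolev bound on the short window, the one-sided bound on the long window, and the stochastic Gronwall lemma at exponent $1/2$ with the $\eps^{-2}$ loss from \eqref{eq:log_divergence2}. The closing step, however, has a genuine gap exactly at the point you yourself flag as ``the main obstacle,'' and the remedy you propose does not work.

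The issue is the $u(0)$ piece after the decomposition $u(s)=u(0)+[u(s)-u(0)]$. You write that the $L^p(\Omega;L^\infty\dot{H}^1)$ moments of $u$ ``dispatch'' it. They do not: they only produce $\Ex\int_0^\eps\eta_\eps\lesssim\eps^2$, which the $\eps^{-2}$ factor from the exponential moment bound on $K$ exactly cancels, leaving $O(1)$ rather than $o(1)$. Worse, the Young-inequality step you describe --- converting the sub-linear term into a forcing proportional to $\abs{u(s)}_{\dot H^1}^2$ --- is lethal: integrating $\abs{u(s)}_{\dot H^1}^2$ over $[0,\eps]$ gives only $O(\eps)$, so the final bound is $\eps^{-2}\cdot O(\eps)=O(\eps^{-1})$, which blows up. You cannot Young away the $Y^{1/2}$ factor; it is essential.

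What the paper does, and what is missing from your outline, is a two-stage bootstrap. In the first stage one proves the crude preliminary short-time estimate
\[
\sup_{t\in[0,\ep]}\Ex Y(t)\lesssim\ep^2,
\]
which requires no smallness --- just Cauchy--Schwarz with $Y^{1/2}\norm{q}_{L^2(\Delta_s)}$, absorption of the $\norm{(\sigma^2)''(0)}_{L^\infty}$ term, and the global $L^2(\Omega)$-moment of $q$. This is cheap but essential: it shows that the random interval $\Delta_t=[X_1(t)\wedge X_2(t),X_1(t)\vee X_2(t)]$, whose length is $Y(t)$, shrinks to a point as $t\downarrow 0$. One then passes through a quantity of the form $\rho(\ep):=\bigl(\sup_{s\in[0,\ep]}\Ex\norm{q(s)}_{L^2(\Delta_s)}^2\bigr)^{1/2}$, and the vanishing of $\rho(\ep)$ has \emph{two} sources: the temporal increment $q(s)-q(0)$ vanishes by \eqref{eq:u_one-sided_continuity}, as you note, but the frozen piece $\norm{q(0)}_{L^2(\Delta_s)}^2$ vanishes only because $\mathds{1}_{\Delta_s}\to 0$ $\Px$-a.s.\ and one can apply dominated convergence (using $q(0)\in L^2(\Omega\times\R)$). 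This gives $\Ex\int_0^\ep\eta_\ep\lesssim\ep^2\,\rho(\ep)=\ep^2\,o(1)$, and only then does the $\eps^{-2}$ loss cancel against something that actually tends to zero. Without the shrinking-interval observation --- which requires the preliminary $O(\ep^2)$ estimate to exist at all --- the argument does not close, and this is exactly where your appeal to $L^p$ moments of $u(0)$ breaks down.

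As a minor remark, you should keep the integrand $\eta_\ep$ in the form $Y^{1/2}\norm{q}_{L^2(\Delta_s)}$ (times an indicator) and apply Cauchy--Schwarz in $\omega$ after taking expectations, rather than applying Young's inequality pathwise; and the local-time term can be dispatched directly by noting $\supp L^0_Y\subset\{X_1=X_2\}\subset\{\sigma(\cdot,X_1)=\sigma(\cdot,X_2)\}$ without a Yamada--Watanabe mollification, though that alternative would also work.
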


\begin{proof}
Let $X_1$, $X_2$, and $T$ be as in the statement of the 
theorem. Without loss of generality, we 
assume throughout the proof that
\begin{equation}\label{eq:N-bound}
	\abs{X_i(t)}\le N, \quad 
	t\in [0,T], \quad i=1,2,
\end{equation}
for some $N>0$. Indeed, introducing the stopping time
$$
\tau_N:=\inf\left\{t\in [0,T]: 
\abs{X_1(t)}>N \,\,\, \text{or} 
\,\,\, \abs{X_2(t)}>N\right\},
$$
we may replace $X_i$ by $\tilde{X}_i(t)
:=X_i\bigl(t\wedge \tau_N\bigr)$, which satisfies 
$\abs{\tilde{X}_i(t)}\le N$ for all $t\in (0,\tau_N]$. 
The SDE for $\tilde{X}_i$ becomes
\begin{align*}
	\tilde{X}_i(t)
	& = x+ \int_0^{t\wedge \tau_N}u(s,X_i(s))\,\d s 
	+\frac14 \int_0^{t\wedge \tau_N}
	\left(\sigma^2\right)'(s,X_i(s)) \,\d s 
	\\ &\quad 
	+\int_0^{t\wedge \tau_N}
	\sigma(s,X_i(s))\, \d W(s)
	\\ & = x+ \int_0^t 
	u\left(s,\tilde{X}_i(s)\right)\,\d s 
	+\frac14 \int_0^t
	\left(\sigma^2\right)'\left(s,\tilde{X}_i(s)\right) \,\d s 
	\\ &\quad +\int_0^t 
	\sigma\left(s,\tilde{X}_i(s)\right)\, \d W(s), 
	\quad t\in [0,\tau_N].
\end{align*}
We can therefore apply the upcoming argument to 
$\tilde{X}_2-\tilde{X}_1$ on $[0,\tau_N]$ 
instead of to $X_2-X_1$ on $[0,T]$, to deduce that
$$
\Ex \sup_{t \in [0,T]} \abs{X_2\left(t\wedge \tau_N\right) 
- X_1\left(t\wedge \tau_N\right)}^{1/2}=0,
$$
for any finite $N$. By the continuity of $X_1$ 
and $X_2$, we have that $\tau_N\to T$ a.s.~as 
$N\to \infty$. Therefore, sending $N\to \infty$, we arrive 
at \eqref{eq:pathwise-uniq}. 

In what follows, we consider $X_2-X_1$
and assume \eqref{eq:N-bound}. We have by linearity
\begin{align*}
	\d (X_2 - X_1) 
	& = \bk{u(t,X_2) - u(t,X_1)} \,\d t 
	+\frac14\left( \left(\sigma^2\right)'(t,X_2)
	-\left(\sigma^2\right)'(t,X_1)\right) \,\d t 
	\\ &\quad
	+\bk{ \sigma(t,X_2)-\sigma(t,X_1)}\, \d W.
\end{align*}
Set $Y: = \abs{X_2 - X_1}$. By the Tanaka formula, 
$$
\d Y = \sgn\left(X_2 - X_1\right) \,\d \left(X_2 - X_1\right)
+\frac12  \bk{\sigma(t,X_2)-\sigma(t,X_1)}^2\, \d L^0_{Y}(t).
$$
Since the local time $L^0_{Y}$ at $0$ of $Y$ is 
supported on the zero set of $X_2 - X_1$, which 
is a subset of the zero set of $\sigma(t,X_2)-\sigma(t,X_1)$, 
the local time correction term is zero.  
Set $\phi_\sigma(t): = \bk{\sigma(t,X_2) 
-\sigma(t,X_1)}/Y(t)$, which is a process 
uniformly bounded in absolute value by $\Lambda(t)$ 
of \eqref{eq:sigma_lipschitz}. Integrating in time yields
\begin{equation}\label{eq:difference_sde}
	\begin{aligned}
		Y(t)& = \int_0^t\sgn\bk{X_2(s)-X_1(s)}
		\int_{X_1(s)}^{X_2(s)} 
		\left( q(s,y)+\frac14\bk{\sigma^2}''(s,y) \right)
		\,\d y\, \d s
		\\&\quad
		+\int_0^t \phi_\sigma(s) Y(s)\,\d W(s),
	\end{aligned}
\end{equation}
where, in view of \eqref{eq:sigma_lipschitz} 
and \eqref{eq:N-bound}, the last 
term is a square-integrable martingale starting from zero; 
see \eqref{eq:stoch_oleinik} for the definition $q$.  
Making use of \eqref{eq:sigma_L2_incl} and taking the expectation, we obtain
\begin{align*}
	\Ex Y(t) & = \Ex \int_0^t 
	\sgn\bk{X_2(s)-X_1(s)}\int_{X_1(s)}^{X_2(s)} 
	q(s,y) \,\d y \,\d s 
	\\ & \quad 
	+\frac14\Ex \int_0^t \sgn(X_2(s) - X_1(s))
	\int_{X_1(s)}^{X_2(s)} 
	\bk{\sigma^2}''(s,y) \,\d y \,\d s 
	\\ & \le 
	\Ex \int_0^t Y^{1/2}(s) 
	\norm{q(s)}_{L^2(\Delta_s)} \,\d s
	+\frac14
	\norm{\left(\sigma^2\right)''(0)}_{L^\infty(\Omega \times \R)}
	\int_0^t \Ex Y(s) \,\d s
	\\ & \quad 
	+ \frac14\Ex\int_0^t  Y^{1/2}(s)
	\norm{\bk{\sigma^2}''(s)
	-\bk{\sigma^2}''(0)}_{L^2(\Delta_s)}\,\d s
	\\ & \le  
	\int_0^t \bk{\Ex Y(s)}^{1/2} 
	\Biggl[
	\bk{\Ex \norm{q(s)}_{L^2(\Delta_s)}^2}^{1/2} 
	\\ & \qquad \qquad \qquad \qquad \qquad
	+\frac14\bk{ \Ex\norm{\bk{\sigma^2}''(s)
	-\bk{\sigma^2}''(0)}_{L^2(\Delta_s)}^2}^{1/2}
	\Biggr]\,\d s
	\\ & \quad 
	+ \frac14 
	\norm{\left(\sigma^2\right)''(0)}_{L^\infty(\Omega \times \R)}
	\int_0^t \Ex Y(s) \,\d s,
\end{align*}
by the Cauchy--Schwarz inequality. 
Here, $\Delta_s$ denotes the (random) interval
$$
\Delta_s = \Bigl[X_1(s) \wedge X_2(s),\, 
X_1(s) \vee X_2(s)\Bigr].
$$

Taking the supremum over $t\in [0,\ep]$ on both sides gives
\begin{align*}
	&\sup_{t\in [0,\ep]}\Ex Y(t) \\
	&\le  \frac{\ep}{4} 
	\norm{\left(\sigma^2\right)''(0)}_{L^\infty(\Omega \times \R)}
	\sup_{t\in [0,\ep]}\Ex Y(t) 
	+ \ep \sup_{t\in [0,\ep] }\bk{\Ex Y(t)}^{1/2} 
	\\ & \qquad \times \sup_{t\in [0,\ep]}
	\left[\bk{\Ex \norm{q(t)}_{L^2(\Delta_t)}^2}^{1/2} 
	+ \frac14\bk{\Ex \norm{\bk{\sigma^2}''(t) 
	- \bk{\sigma^2}''(0)}_{L^2(\Delta_t)}^2}^{1/2}\right].
\end{align*}
Fix $\ep$ so small that 
$\frac{\ep}{4}\norm{\left(\sigma^2
\right)''(0)}_{L^\infty(\Omega \times \R)}\le \frac12$. 
The first term on the right-hand side can be 
absorbed by the term on the left-hand side. We then divide 
through by $\sup\limits_{t\in [0,\ep]} \bk{\Ex Y(t)}^{1/2}$ 
and square both sides, eventually arriving at
\begin{equation}\label{eq:small_time}
	\begin{aligned}
		\sup_{t\in [0,\ep] }\Ex Y(t)
		&  \le  8 \ep^2 \sup_{t\in [0,\ep]}
		\Ex \left[\norm{q(t)}_{L^2(\R)}^2 
		+\norm{\bk{\sigma^2}''(t) 
		-\bk{\sigma^2}''(0)}_{L^2(\R)}^2\right]
		\lesssim \eps^2.
	\end{aligned}
\end{equation}

The estimate \eqref{eq:small_time} allows us 
to control $\Ex Y(t)$ near $t = 0$. 
Using the one-sided bound \eqref{eq:stoch_oleinik}, which 
deteriorates near $t = 0$ for every $\omega \in \Omega$, 
in combination with the quadratic short-time estimate \eqref{eq:small_time}, we will next 
deduce a global estimate on the entire time 
interval $[0,T]$. 

Given the short-time estimate \eqref{eq:small_time}, 
we begin afresh from \eqref{eq:difference_sde}. 
Again let $\ep$ be so small that $\ep \norm{\left(\sigma^2
\right)''(0)}_{L^\infty(\Omega \times \R)}\le 2$, 
and $t>\eps$. We can then write the inequality
\begin{align*}
	Y(t)& = \int_0^\ep \sgn\bk{X_2(s)-X_1(s)}
	\int_{X_1(s)}^{X_2(s)} q(s,y)
	\,\d y \,\d s
	\\ &\qquad 	+\int_\ep^t \sgn\bk{X_2(s) - X_1(s)}
	\int_{X_1(s)}^{X_2(s)} q(s,y)	\,\d y \,\d s 
	\\ & \qquad	+ \frac14\int_0^t \sgn\bk{X_2(s) - X_1(s)} 
	\int_{X_1(s)}^{X_2(s)} \bk{\sigma^2}''(s,y)\,\d y\,\d s 
	\\ & \qquad +\int_0^t\phi_\sigma(s) Y(s)\,\d W(s)
	\\ & \le \int_0^t \eta_\ep(s)\,\d s
	+ \int_0^t Y(s)\,\d A_\ep(s)+M(t),
\end{align*}
where, for $t\in [0,T]$, 
\begin{align}
	M(t)&:= \int_0^t \phi_\sigma(s) 
	Y(s)\,\d W(s),\notag\\
	\eta_\ep(t) &:=
	\mathds{1}_{\{t\le \ep\}}
	\sgn\bk{X_2(t) - X_1(t)}
	\int_{X_1(t)}^{X_2(t)} \abs{q(t,y)}\,\d y,
	\notag\\
	\intertext{and,}\notag\\
	A_\ep(t) & := \int_0^t 
	\left[\mathds{1}_{\{s \ge \ep\}} K(s)
	+\Lambda(s) \right]\,\d s, 
	\label{eq:gronwall_functions3}
\end{align}
for $K(t)=K(\omega,t)$ defined 
in \eqref{eq:stoch_oleinik_g}, and because, from \eqref{eq:sigma_lip},
\begin{align*}
	\frac14 \sgn\bk{X_2(s) - X_1(s)} 	\int_{X_1(s)}^{X_2(s)} 
	\bk{\sigma^2}''(s,y)\,\d y 
	\le \frac14 Y(s) \Lambda(s).
\end{align*}
 
The adapted process $\eta_\eps$ is 
non-negative. Furthermore, using first the 
Cauchy--Schwarz inequality and then the short-time 
estimate \eqref{eq:small_time}, we have
\begin{align*}
	\Ex \int_0^t \eta_\eps(s)\,ds
	& \le  \int_0^\eps \bk{\Ex Y(s)}^{1/2} 
	\bk{\Ex \norm{q(s)}_{L^2(\Delta_s)}^2}^{1/2} \,\d s
	\lesssim \eps^2 \rho(\eps),
\end{align*}
where
\begin{align*}
	\rho(\eps)	
	& :=\bk{ \sup_{s\in [0,\eps]}
	\Ex \norm{q(s)}_{L^2(\Delta_s)}^2}^{1/2}.
\end{align*}
We will show that $\rho(\eps)=o(1)$ as $\eps\to0$.	
Furthermore, $A_\ep$ is a non-decreasing adapted process 
with $A(0) =0$. From \eqref{eq:log_divergence2} 
and \eqref{eq:sigma_exp},
\begin{align*}
	\Ex \exp\bk{\mu A_\ep(t) } &
	= \Ex \exp \bk{\mu \int_\ep^t K(s) \,d s
	+\mu\int_0^t \Lambda(s) \,\d s}
	\\ &\le \bk{\Ex \exp 
	\bk{2\mu \int_\ep^t K(s) \,d s }}^{1/2}
	\bk{\Ex \exp\bk{2\mu \int_0^t 
	\Lambda(s)\,\d s }}^{1/2}
	\\ &\le C_\mu \ep^{-2\mu},
\end{align*}
for a number $\mu$ such that 
$2\mu=p$, cf.~\eqref{eq:log_divergence2}.

Finally, by \eqref{eq:sigma_lipschitz} 
and \eqref{eq:N-bound}, $M$ is a (square-integrable) 
martingale with $M(0) = 0$. 

Hence, in view of Lemma~\ref{lem:stochastic_gronwall}, 
the stochastic Gronwall inequality
with $p = \frac12$ and a suitable 
$r\in \left(1/2,1\right)$, we arrive at
\begin{equation}\label{eq:gronwallbound1}
	\begin{aligned}
		\bk{\Ex \sup_{s \in [0,t]} Y^{1/2}(s)}^2
		&\le\bk{\frac{2r}{2r-1}}^2 
		\bk{\Ex \exp\Big(\frac{r}{1-r}
		A_\ep(t)\Big)}^{(1-r)/r} 
		\Ex\bk{\int_0^t \eta_\ep(s) \,\d t}
		\\ & 
		\overset{\eqref{eq:stoch_oleinik_g}}{\le} 
		C_r e^{C_\sigma (t - \ep)} 
		\bk{\ep^{-2r/(1 - r)}}^{(1-r)/r}
		\ep^2\rho(\eps)\lesssim \rho(\eps),
	\end{aligned}
\end{equation}
where $C_r$ is a constant depending 
only on $r$ and $C_\sigma$ is 
coming from \eqref{eq:sigma_init}.

Next we will show that the right-continuity 
condition \eqref{eq:u_one-sided_continuity} 
ensures that
\begin{equation}\label{eq:one-sided_continuity-qest}
	\lim_{\ep \to 0}\sup_{t\in [0,\ep]}
	\Ex \norm{q(t)}_{L^2(\Delta_t)}^2 = 0.
\end{equation}
Clearly, 
\begin{equation*}
	\sup_{t\in [0,\ep]}
	\Ex \norm{q(t)}_{L^2(\Delta_t)}^2 
	\le 2 \sup_{t\in [0,\ep]} 
	\Ex \norm{q(t)-q(0)}_{L^2(\Delta_t)}^2
	+2\sup_{t\in [0,\ep]} 
	\Ex \norm{q(0)}_{L^2(\Delta_t)}^2.
\end{equation*}
The first term on the right-hand side is bounded 
by $2\, \sup_{t\in [0,\ep]}
\Ex \abs{u(t)-u_0}_{\dot{H}^1(\R)}^2$, which 
tends to zero by \eqref{eq:u_one-sided_continuity}.  
Since $\abs{\Delta_t} = Y(t)$, 
\eqref{eq:small_time} implies 
$\mathds{1}_{\Delta_t} \to 0$, 
$\mathbb{P}$-almost surely, as $t \to 0$. 
We have $\Ex \norm{q(0)}_{L^2(\Delta_t)}^2
= \Ex \left(\mathds{1}_{\Delta_t} 
\norm{q(0)}_{L^2(\R)}^2\right) \to 0$ as $t\to 0$ by 
the dominated convergence theorem, 
since $q(0) \in L^2(\Omega \times \R)$. 
This proves \eqref{eq:one-sided_continuity-qest}.

Given \eqref{eq:one-sided_continuity-qest} 
and \eqref{eq:sigma_one-sided_tcont}, it follows 
that $\rho(\eps)=o(1)$ as $\eps\to 0$. 
As a result, we can send $\ep \to 0$ 
in \eqref{eq:gronwallbound1} 
to reach the conclusion that 
$\Ex \sup_{s \in [0,t]} Y^{1/2}(s)=0$, 
for any $t\in [0,T]$, which implies the 
desired result \eqref{eq:pathwise-uniq}.
\end{proof}

\begin{rem}
We point out that whilst the result above 
holds for $q(0) \in L^2(\R)$, that is, 
$q^2(0) \in L^1(\R)$, it fails for general $q(t)$ 
for which the right-continuity limit
$\lim_{t \downarrow 0} q^2(t)$ exists only 
in the sense of measures---but not 
in $L^1$ as required by \eqref{eq:u_one-sided_continuity}. 
An example comes from the deterministic Hunter--Saxton 
equation with an initial condition of the form 
$q^2(0)= \delta_0$. Although it is possible 
to define characteristics for this case, 
the characteristics emanating from $x = 0$ are not unique. 
The temporal continuity condition 
\eqref{eq:u_one-sided_continuity} is essential.
\end{rem}

\section{Existence of solution}\label{sec:existence}
In this section, we establish the existence of strong 
solutions for the SDE \eqref{eq:X_sde_a} by approximating 
\eqref{eq:X_sde_a} using a truncated coefficient in 
a way that allows us to apply a well-posedness
theorem of Krylov, reproduced below. We then show 
that the solutions to the approximating SDEs form 
a Cauchy sequence in an appropriate space, 
from which we recover a solution to our SDE. 

We begin by recalling Krylov's theorem for the 
well-posedness of SDEs with random 
coefficients \cite[Thm.~1.2]{MR1731794}.

\begin{thm}\cite{MR1731794}\label{thm:krylov_existence}
Let $\bS$ be a stochastic basis. 
Assume that for any $\omega \in \Omega$, $t \ge 0$, 
and $x \in \R^d$, we have $V(\omega,t,x) \in \R^{d \times d}$ 
and $b(\omega,t,x) \in \R^d$, and that ${V}$ and $b$ 
are continuous in $x$ for any $(\omega, t)$, 
and measurable in $(\omega, t)$. Moreover, assume
\begin{itemize}
	\item[(i)] boundedness: for any $T,\ell\in [0,\infty)$, 
	$\omega \in \Omega$, and any matrix norm $\norm{V}$, 
	$$
	\int_0^T \sup_{\abs{x}<\ell} 
	\bk{\abs{b(t,x)}+\norm{V(t,x)}^2}
	\,\d t<\infty.
	$$
	
	\item[(ii)] monotonicity: for all $t,\ell\in[0,\infty)$, 
	$x,y\in B_\ell(0)$, the ball with 
	radius $\ell$ and centred 
	at the origin, and $\omega \in \Omega$,
	$$
	2(x-y) \cdot \bigl(b(t,x)-b(t,y)\bigr) 
	+ \norm{V(t,x)-V(t,y)}^2 \le \tilde{K}(t,\ell) \abs{x-y}^2.
	$$
	
	\item[(iii)] coercivity: for all $t,\ell\in [0,\infty)$, 
	$x \in B_\ell(0)$, and $\omega \in \Omega$,
	$$
	2 x \cdot b(t,x) + \norm{V(t,x)}^2 
	\le \tilde{K}(t,1)\left(1 + \abs{x}^2\right),
	$$
\end{itemize}
where $\tilde{K}(t,\ell)$ is an adapted non-negative 
processes satisfying
\begin{align}\label{eq:K_bound}
	\int_0^T \tilde{K}(t,\ell)\, \d t<\infty, 	
	\quad 
	\text{for all $\omega \in \Omega$, $T,\ell\in [0,\infty)$}.
\end{align}
Let $X_0$ be an $\mathcal{F}_0$-measurable 
$\R^d$-valued random variable. Then the SDE
$$
\d X(t)=b(t,X(t))\,\d t+V(t, X(t)) \,\d W(t), 
\quad X(0)=X_0
$$ 
has a solution which is unique up 
to indistinguishability. Moreover,
\begin{equation}\label{eq:Krylov-L2est}
	\Ex \left( e^{-\alpha(t)} X^2(t)\right)
	\le x^2 +1, \qquad 
	\alpha(t) :=\int_0^t \tilde{K}(s,1)\, \d s.
\end{equation}
\end{thm}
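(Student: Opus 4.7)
My plan is to prove the theorem in three stages: first establish the a priori moment bound \eqref{eq:Krylov-L2est}, next prove pathwise uniqueness from the monotonicity assumption (ii), and finally construct solutions via an Euler discretization scheme, as hinted by the remark preceding the statement. For the moment bound, assume a solution $X$ exists and apply It\^o's formula to $e^{-\alpha(t)}|X(t)|^2$. This produces
\begin{equation*}
\d\bigl(e^{-\alpha(t)}|X|^2\bigr) = e^{-\alpha(t)}\Bigl[-\tilde K(t,1)|X|^2 + 2X\cdot b(t,X) + \|V(t,X)\|^2\Bigr]\d t + \d M(t),
\end{equation*}
with $M$ a local martingale. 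The coercivity hypothesis (iii) bounds the bracket by $\tilde K(t,1)$, so after localising with a stopping time (justified by \eqref{eq:K_bound}) and taking expectation,
\begin{equation*}
\Ex\bigl(e^{-\alpha(t)}|X(t)|^2\bigr) \le |x|^2 + \Ex\int_0^t e^{-\alpha(s)}\tilde K(s,1)\,\d s = |x|^2 + 1 - \Ex e^{-\alpha(t)} \le |x|^2 + 1,
\end{equation*}
which both yields \eqref{eq:Krylov-L2est} and rules out finite-time explosion.

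For uniqueness, given two solutions $X_1, X_2$ with the same initial data, introduce the localising stopping times $\tau_\ell := \inf\{t : |X_1(t)| \vee |X_2(t)| \ge \ell\}$. Applying It\^o's formula to $|X_1 - X_2|^2$ on $[0, t \wedge \tau_\ell]$, the drift and quadratic variation contributions assemble into precisely the expression appearing in (ii), yielding
\begin{equation*}
\d |X_1 - X_2|^2 \le \tilde K(t,\ell)|X_1-X_2|^2\,\d t + \d N(t),
\end{equation*}
for some local martingale $N$ vanishing at $0$. Taking expectations and invoking Gronwall's inequality, using $\int_0^T \tilde K(t,\ell)\,\d t < \infty$ from \eqref{eq:K_bound}, gives $X_1(t \wedge \tau_\ell) = X_2(t \wedge \tau_\ell)$ almost surely for every $t$. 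Sending $\ell \to \infty$, with non-explosion guaranteed by the a priori bound of the previous paragraph, yields indistinguishability on $[0,T]$.

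For existence, I would implement the Euler discretization scheme: on a uniform partition of $[0,T]$ of mesh $h_n = T/n$, define $X^{(n)}$ by freezing $b$ and $V$ at the left endpoint of each subinterval $[k h_n, (k+1) h_n]$. The argument of the first paragraph adapts to yield a uniform-in-$n$ moment bound and hence tightness of $\{X^{(n)}\}$ in $C([0,T]; \R^d)$. The main obstacle is proving strong convergence: since $b$ and $V$ are only continuous (not Lipschitz) in $x$, a direct Cauchy estimate between $X^{(n)}$ and $X^{(m)}$ is not available. My approach would be to exploit the monotonicity condition (ii), which controls the joint drift-diffusion differences without two-sided Lipschitz control, within a discrete-time Gronwall argument on $|X^{(n)} - X^{(m)}|^2$ up to the appropriate stopping time, supplemented by a control of the discretisation error $|b(\cdot, X^{(n)}(s)) - b(\cdot, X^{(n)}(\lfloor s \rfloor_n))|$ (and analogously for $V$) deduced from continuity together with the uniform moment bound. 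Passing to the limit then requires strong convergence of the stochastic integrals, which uses progressive measurability, uniform $L^2$ boundedness, and a dominated convergence argument; identifying the limit as a solution of the SDE is the most delicate step, and the place where the full force of the continuity in $x$ is invoked.
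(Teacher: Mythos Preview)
The paper does not prove this theorem. Theorem~\ref{thm:krylov_existence} is quoted verbatim from Krylov \cite[Thm.~1.2]{MR1731794} and is used as a black box in the existence argument of Section~\ref{sec:existence}; the paper merely recalls the statement and remarks in the introduction that Krylov's proof ``is based on a detailed convergence analysis of the Euler discretization scheme.'' There is therefore no proof in the paper to compare your proposal against.

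That said, your outline is broadly faithful to Krylov's actual argument: the moment bound and uniqueness parts are standard and essentially correct as written. Your existence sketch, however, contains a conceptual wobble. You invoke tightness of $\{X^{(n)}\}$ in $C([0,T];\R^d)$, which points toward a weak-convergence/compactness route, and then pivot to a strong Cauchy estimate. Tightness alone would only deliver weak (martingale) solutions on some stochastic basis, not strong solutions on the given one; Krylov's proof avoids this by showing directly that the Euler iterates are Cauchy in $L^2$ up to a stopping time, using the monotonicity condition (ii) exactly as you suggest in the second half of the paragraph. The tightness step is unnecessary and, as stated, does not fit the strong-solution conclusion. Also, your control of the discretisation error $\abs{b(\cdot,X^{(n)}(s))-b(\cdot,X^{(n)}(\lfloor s\rfloor_n))}$ via ``continuity together with the uniform moment bound'' is the genuinely delicate point: one needs a modulus-of-continuity argument that is uniform on compacts, combined with the local boundedness from (i) and \eqref{eq:K_bound}, and this is where Krylov's analysis is detailed rather than routine.
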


\begin{rem}[Logarithmic divergence]\label{rem:log_div}
The monotonicity condition in 
Theorem \ref{thm:krylov_existence} 
can be viewed as a one-sided Lipschitz condition. 
In our motivating example, cf.~Remark~\ref{rem:example} 
and the one-sided gradient 
bound \eqref{eq:stoch_oleinik}, we have
$$
(x-y)\bk{u(t,x)-u(t,y)} 
\le  \abs{x-y}^2 \bk{ C 
+ \frac{e^{-\norm{\sigma'}_{L^\infty} W(t)}}
{\frac12\int_0^t e^{-\norm{\sigma'}_{L^\infty} W(s)}
\,\d s}}.
$$
Unfortunately, the factor multiplying $\abs{x-y}^2$ is not 
sufficiently well controlled at $t= 0$ to 
ensure \eqref{eq:K_bound}. There is the 
possibility of a logarithmic divergence in 
the temporal integral. As a result, 
Theorem \ref{thm:krylov_existence} 
does not apply to our problem.
\end{rem}

Next we introduce an approximate SDE by 
truncating the gradient $q = \pd_x u$. 
The reason for doing so is explained in 
Remark~\ref{rem:log_div}. The strong well-posedness 
of these approximate SDEs then follows 
from Theorem \ref{thm:krylov_existence}.

\begin{lem}\label{lem:existence_lemma_R}
Suppose $u \in L^p(\Omega;L^\infty([0,T];
\dot{H}^1(\R)))$ satisfies 
conditions \eqref{eq:stoch_oleinik_g} 
and \eqref{eq:u_one-sided_continuity}, and 
$\sigma$ satisfies \eqref{eq:sigma_lip}, 
\eqref{eq:sigma_lipschitz}--\eqref{eq:sigma_one-sided_tcont}.
Fix $R > 0$. Let $u_R$ be the process obtained 
from $q := \pd_x u$ by one-sided 
truncation at level $R$:
\begin{align}\label{eq:truncated_u}
	u_R(t,x) := \int_{-\infty}^x \vartheta_R(q(t,y))
	\,\d y, 
	\quad 
	\vartheta_R(q):= 
	\begin{cases}
		q, & \text{if $q \le R$},
		\\ 
		R, & \text{if $q> R$}. 
	\end{cases}
\end{align}
The SDE 
\begin{align}\label{eq:X_sde_R}
	\d X_R = u_R(t,X_R) \,\d t 
	+\frac14 \left(\sigma^2\right)'(t,X_R) \,\d t 
	+\sigma(t,X_R) \,\d W(t), \quad X(0) = x \in \R
\end{align}
has a unique strong solution.
\end{lem}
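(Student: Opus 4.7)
The plan is to verify the three hypotheses---boundedness, monotonicity, and coercivity---of Krylov's theorem (Theorem~\ref{thm:krylov_existence}) for the drift $b(\omega,t,x) := u_R(t,x) + \tfrac14 \bk{\sigma^2}'(t,x)$ and diffusion $V(\omega,t,x) := \sigma(t,x)$, and then invoke Theorem~\ref{thm:krylov_existence} directly. The crucial point behind the one-sided truncation \eqref{eq:truncated_u} is that $\pd_x u_R = \vartheta_R(q)\le R$ is now bounded from above by a deterministic constant, which removes precisely the logarithmic divergence at $t=0$ identified in Remark~\ref{rem:log_div}.

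First I would establish the pointwise one-sided Lipschitz bound
$$
(x-y)\bk{u_R(t,x)-u_R(t,y)}
=(x-y)\int_y^x \vartheta_R(q(t,z))\,\d z
\le R(x-y)^2,
$$
which is valid for all $x,y\in\R$ since $\vartheta_R(q)\le R$. Combining this with the global Lipschitz bounds for $\sigma$ and $\bk{\sigma^2}'$ provided by \eqref{eq:sigma_lip} yields the monotonicity condition (ii) with the choice
$$
\tilde K(t,\ell) = 2R + \tfrac12 \Lambda(t) + \Lambda^2(t),
$$
independently of $\ell$. Because $\vartheta_R(q)\le R$ only from above, both $u_R$ and $b$ grow at most linearly in $x$ in an appropriately one-sided sense; paired with $|\sigma(t,x)|\le |\sigma(t,0)|+\Lambda(t)|x|$ and the corresponding bound for $\bk{\sigma^2}'$, this yields the coercivity (iii) with the same $\tilde K$ (possibly enlarged by an absolute constant). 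The boundedness (i) follows in the same way, upon noting that $|u_R(t,0)|$ differs from $|u(t,0)|$ by $\int_{-\infty}^{0}(q-R)_+\,\d y \le R^{-1}|u(t)|_{\dot H^1}^2$, which is locally bounded in $t$ $\mathbb{P}$-a.s.

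Next I would check that $\tilde K$ meets the integrability requirement $\int_0^T \tilde K(t,\ell)\,\d t <\infty$ for $\mathbb{P}$-a.e.\ $\omega$. This is immediate from \eqref{eq:sigma_lipschitz}, which in particular implies $\Lambda\in L^2([0,T])$ almost surely and therefore also $\Lambda\in L^1([0,T])$. Progressive measurability of $b$ and $V$ is inherited from the corresponding property of $u$ and $\sigma$, while continuity of $u_R(t,\cdot)$ in $x$ is immediate from its definition as an indefinite integral; continuity of the $\sigma$-terms comes from \eqref{eq:sigma_lip}.

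With all hypotheses in place, Theorem~\ref{thm:krylov_existence} delivers an indistinguishable strong solution $X_R$ of \eqref{eq:X_sde_R} together with the second-moment control \eqref{eq:Krylov-L2est}. The only genuinely delicate point in the argument is the construction of the adapted process $\tilde K$ with the pathwise integrability demanded by \eqref{eq:K_bound}; once the one-sided Lipschitz bound on $u_R$ and the two-sided Lipschitz bounds on $\sigma,\bk{\sigma^2}'$ are assembled, the remaining verifications are routine.
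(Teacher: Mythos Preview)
Your approach matches the paper's: verify Krylov's conditions (i)--(iii) for $b=u_R+\tfrac14(\sigma^2)'$, $V=\sigma$, then invoke Theorem~\ref{thm:krylov_existence}. Your derivation of the one-sided Lipschitz bound $(x-y)\int_y^x\vartheta_R(q)\le R(x-y)^2$ for monotonicity is in fact slightly cleaner than the paper's, which passes through an absolute value at that step.

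There is one imprecision worth flagging. For coercivity you write that (iii) holds ``with the same $\tilde K$ (possibly enlarged by an absolute constant)'', but the estimates $2x\,u_R(t,x)\le 2|x|\,|u_R(t,0)|+2Rx^2$ and $\sigma^2(t,x)\le 2\sigma^2(t,0)+2\Lambda^2(t)x^2$ force $\tilde K$ to absorb the random, time-dependent quantities $|u_R(t,0)|$ (or $\norm{u_R(t)}_{L^\infty(\R)}$), $|(\sigma^2)'(t,0)|$, and $\sigma^2(t,0)$---not merely a constant. The paper makes this explicit and then checks the pathwise integrability \eqref{eq:K_bound} for each piece: the $u_R$-term via $u\in L^p(\Omega;L^\infty([0,T];\dot H^1(\R)))$ (your bound $|u_R(t,0)-u(t,0)|\le R^{-1}|u(t)|_{\dot H^1}^2$ is precisely what is needed here), and the $\sigma$-origin terms via \eqref{eq:sigma_origin}. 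Once you enlarge $\tilde K$ to include these origin contributions and verify \eqref{eq:K_bound} for all of them, the argument is complete; the boundedness condition (i) is handled by the same ingredients.
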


\begin{proof}
We take $b = u_R + \frac14 \left(\sigma^2\right)'
=u_R + \frac12 \sigma' \sigma$ 
and $V = \sigma$, on $\R^d$ with $d = 1$. 
The lemma follows from Theorem \ref{thm:krylov_existence}
once we have verified conditions (i), (ii), and (iii). 

By assumption, $\Ex \norm{u}_{L^\infty([0,T]\times\R)}^p
\lesssim \Ex \abs{u}_{L^\infty([0,T];\dot{H}^1(\R))}^p
\lesssim_p 1$ for all $p\in [1,\infty)$. Of course, the same 
bound holds for $u_R$: 
\begin{equation}\label{eq:uR_pmoment}
	\Ex \norm{u_R}_{L^\infty([0,T]\times \R)}^p 
	\lesssim_p 1.
\end{equation}
From this bound (with $p=1$),
$$
\sup_{\substack{t\in [0,T] \\ \abs{x}<\ell}} 
\abs{u_R(\omega,t,x)}<\infty,
\quad \text{for $\mathbb{P}$-a.e.~$\omega\in \Omega$}.
$$
The Lipschitz condition \eqref{eq:sigma_lip} 
and \eqref{eq:sigma_lipschitz}, 
\eqref{eq:sigma_origin} imply
$$
\int_0^T \sup_{\abs{x}<\ell} 
\left(\sigma^2\right)'(t,x)\,\d t 
\le \int_0^T 
\left( \abs{\left(\sigma^2\right)'(t,0)} 
+\Lambda(t) \ell\right)\,\d t 
< \infty. 
$$
Similarly, we have
$$
\int_0^T \sup_{\abs{x}<\ell} 
\sigma^2(t,x)\,\d t 
\le \int_0^T 
\bigl(\abs{\sigma(t,0)} + \Lambda(t)\ell\bigr)^2
\,\d t < \infty.
$$
Hence
\begin{equation*}
	\int_0^T \sup_{\abs{x}<\ell} \left(
	\abs{u_R(t,x)+\frac14 \left(\sigma^2\right)'(t,x)}
	+\abs{\sigma(t,x)}^2\right)\,\d t  < \infty,
\end{equation*}
which is (i).

For condition (ii), we have by 
\eqref{eq:sigma_lip} that
\begin{align*}
	&2(x-y) \,\bk{u_R(t,x)-u_R(t,y)  
	+\frac14\left(\sigma^2\right)'(t,x) 
	-\frac14\left(\sigma^2\right)'(t,y)}
	\\ & \qquad\qquad\qquad\qquad\qquad
	+\abs{\sigma(t,x)-\sigma(t,y)}^2
	\\ & \quad
	\le 2 \abs{x-y} 	
	\bk{\abs{\int_x^y \vartheta_R(q(t,z)) \,\d z} 
	+\frac14 \Lambda(t)\abs{x-y}}
	+\Lambda^2(t)\abs{x-y}^2
	\\ & \quad 
	\le \bk{2R+\Lambda(t)+\Lambda^2(t)}
	\abs{x-y}^2=: \tilde{K}_1(t) \abs{x-y}^2,
\end{align*}
and $\tilde{K}_1(t)$ is readily seen to 
satisfy \eqref{eq:K_bound} by \eqref{eq:sigma_lipschitz}.

Finally, condition (iii) is a result of
\begin{align*}
	& 2x \, u_R(t,x)
	+ \frac12  x \left(\sigma^2\right)'(t,x)
	+\sigma^2(t,x) 
	\\ & \quad
	\le 2\abs{x}  \norm{u_R(t)}_{L^\infty(\R)} 
	+\frac12 \abs{x}\left( \abs{\left(\sigma^2\right)'(t,0)}
	+\Lambda(t) \abs{x}\right)
	+\bigl(\abs{\sigma(t,0)}+\Lambda(t) \abs{x}\bigr)^2 
	\\ &\quad
	\le \left(\norm{u_R(t)}_{L^\infty(\R)}
	+ \abs{\left(\sigma^2\right)'(t,0)} 
	+ \frac12\Lambda(t)
	+2\sigma^2(t,0)
	+\Lambda^2(t)\right) \left(1+x^2\right)
	\\ &\quad
	=:\tilde{K}_2(t) \left(1 + x^2\right),
\end{align*}
where we have used \eqref{eq:sigma_lip}, 
\eqref{eq:sigma_lipschitz}, and \eqref{eq:sigma_origin}. 
By \eqref{eq:uR_pmoment}, \eqref{eq:sigma_lipschitz}, and 
\eqref{eq:sigma_origin}, it follows that $\tilde{K}_2$ 
satisfies \eqref{eq:K_bound}.

If we take $\tilde{K}(t,\ell)=\tilde{K}(t):=\tilde{K}_1(t)+\tilde{K}_2(t)$ (so $\tilde{K}$ is 
independent of $\ell$, but dependent on $R$), then 
all three conditions are verified.
\end{proof}

The next lemma supplies $R$-independent estimates 
for $X_R$ in $L^p(\Omega; C([0,T]))$ 
for any finite $p$. Note carefully that 
the $L^2$-estimate on $X_R(t)$ coming 
from Theorem \ref{thm:krylov_existence}, 
cf.~\eqref{eq:Krylov-L2est}, is useless 
because our $K$ depends on $R$.

\begin{lem}\label{lem:X_R_pmoment}
Let $X_R$ be the solution constructed in 
Lemma \ref{lem:existence_lemma_R}. 
Assume in addition that \eqref{eq:sigma_exp} and 
\eqref{eq:sigma_origin} hold. 
We have the uniform-in-$R$ bound
\begin{equation}\label{eq:XR-Lp}
	\Ex \sup_{t\in [0,T]} \abs{X}^p 
	\lesssim_{T,p} 
	\abs{x}^{4p} \lesssim_{x,T,p} 1, 
	\qquad p\in [1,\infty). 
\end{equation}
\end{lem}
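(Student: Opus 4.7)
The strategy is the classical $L^p$ energy argument — Itô's formula on $(1+X_R^2)^p$, BDG on the martingale part, Gronwall's lemma — adapted to accommodate the random Lipschitz constant $\Lambda(t)$ of the noise coefficient. First I localize via $\tau_M:=\inf\{t\in[0,T]:|X_R(t)|\ge M\}\wedge T$: on $[0,\tau_M]$ the stopped process is bounded, all Itô calculus is justified, and the stochastic integrals against $\sigma(s,X_R)\,dW$ are genuine martingales; $M$ is sent to infinity at the end once an $M$-independent bound has been obtained.

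Applying Itô's formula to $f(X_R):=(1+X_R^2)^p$ on $[0,t\wedge\tau_M]$, the Lipschitz bound \eqref{eq:sigma_lip} and the at-the-origin bounds \eqref{eq:sigma_origin} give
\[
|\sigma(s,X_R)|^2\le 2|\sigma(s,0)|^2+2\Lambda^2(s)X_R^2,\qquad
|(\sigma^2)'(s,X_R)|\le|(\sigma^2)'(s,0)|+\Lambda(s)|X_R|,
\]
so that, after Young's inequality and the uniform-in-$R$ moment bound \eqref{eq:uR_pmoment} for $u_R$, the drift of $f(X_R)$ is dominated pathwise by
\[
C_p\bigl(1+\Lambda(s)+\Lambda^2(s)\bigr)f(X_R(s))
+C_p\bigl(\norm{u_R(s)}_{L^\infty}^{2p}+|(\sigma^2)'(s,0)|^{2p}+|\sigma(s,0)|^{2p}\bigr).
\]
The chief obstacle is the pathwise random coefficient $\Lambda^2(s)\,f(X_R)$, which prevents a direct appeal to deterministic Gronwall after taking expectation. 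I would handle it with an integrating factor: set $A(t):=\int_0^t(1+\Lambda^2(s))\,ds$ and apply Itô's formula to $Z(t):=e^{-\kappa A(t)}f(X_R(t))$ with $\kappa$ large enough to absorb the $\Lambda^2 f(X_R)$ contribution. This yields an inequality of the form $dZ\le\bigl(C_1 Z+C_p\phi(s)\bigr)\,ds+dM(s)$, where $\phi(s)$ depends only on $\norm{u_R(s)}_{L^\infty}$, $\sigma(s,0)$ and $(\sigma^2)'(s,0)$ — integrable in $\omega,t$ by hypothesis — and $dM$ is a centred martingale increment on $[0,\tau_M]$. Deterministic Gronwall then produces $\Ex Z(t\wedge\tau_M)\lesssim_{T,p}1+|x|^{2p}$, and one unweights via Cauchy--Schwarz,
\[
\Ex f(X_R(t\wedge\tau_M))\le \bigl(\Ex e^{2\kappa A(t)}\bigr)^{1/2}\bigl(\Ex e^{-2\kappa A(t)} f(X_R)^2\bigr)^{1/2},
\]
whose first factor is finite by the exponential-moment assumption \eqref{eq:sigma_exp}; the second factor is controlled by re-running the same argument with $p$ replaced by $2p$, which accounts for the power $|x|^{4p}$ appearing in \eqref{eq:XR-Lp}.

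For the supremum in time, the same procedure is applied, but I first take $\sup_{s\le t\wedge\tau_M}$ in the time-integrated form of the Itô identity for $Z$ and then invoke BDG on the martingale part. The resulting bound $C\,\Ex\bigl(\int_0^{t\wedge\tau_M}\Lambda^2 Z^2\,ds\bigr)^{1/2}$ splits by Young's inequality into $\epsilon\,\Ex\sup_{s\le t\wedge\tau_M}Z$, absorbed into the left-hand side for sufficiently small $\epsilon$, plus a residue again controlled by the integrating-factor trick and \eqref{eq:sigma_exp}. Since all constants are independent of $M$ and $R$, Fatou's lemma together with $\tau_M\to T$ almost surely (an immediate consequence of the a priori bound just obtained) delivers \eqref{eq:XR-Lp} uniformly in $R$.
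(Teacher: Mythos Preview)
Your argument is essentially correct, but it takes a different route from the paper's. The paper applies It\^o's formula to $|X_R|^{2p}$ and obtains a pathwise inequality of the form $\d\xi\le \eta\,\d t+\xi\,\d A+\d M$, with $\xi=|X_R|^{2p}$, $A(t)=C_p\!\int_0^t(1+\Lambda^2)\,\d s$, and $\eta$ collecting the source terms $\norm{u_R}_{L^\infty}^{2p}$, $|(\sigma^2)'(s,0)|^{2p}$, $|\sigma(s,0)|^{2p}$. It then invokes the stochastic Gronwall inequality (Lemma~\ref{lem:stochastic_gronwall}) directly, with exponents $p'=\tfrac12$ and $r=\tfrac23$, which handles the random process $A$ and the martingale $M$ in one step and already delivers the supremum-in-time bound; the exponential-moment assumption \eqref{eq:sigma_exp} enters only through the factor $(\Ex e^{\frac{r}{1-r}A(T)})^{(1-r)/r}$ on the right-hand side.

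Your approach---integrating factor $e^{-\kappa A}$ to neutralise the random $\Lambda^2$ coefficient, then deterministic Gronwall for the fixed-time bound, then BDG for the supremum, and finally an unweighting via Cauchy--Schwarz that forces a second pass at exponent $2p$---is the classical alternative and does work. A couple of points you gloss over deserve care: the BDG ``residue'' $\Ex\!\int_0^t\Lambda^2 Z\,\d s$ is not controlled by $\Ex Z$ alone (since $\Lambda$ and $Z$ are correlated); one needs to take $\kappa$ strictly larger than the drift constant so that the $Z$-inequality retains a term $-Z\,\d A$ on the right, which after integration yields $\Ex\!\int_0^T Z(1+\Lambda^2)\,\d s\lesssim 1+|x|^{2p}$. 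Likewise, for the unweighting to close, $\kappa$ must be chosen so that $2\kappa$ dominates the corresponding constant in the $2p$-run, ensuring $(\sup Z)^2\le\sup\tilde Z$. Once these choices are made the argument is complete. The paper's route is shorter and thematically aligned with its uniqueness proof (same stochastic Gronwall lemma), while yours avoids that lemma entirely and is more self-contained; the trade-off is the extra unweighting and exponent-doubling machinery.
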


\begin{proof}
We make frequent use of the following elementary 
inequalities, which hold for all $r\ge 2$ 
and $a,b,\epsilon>0$:
$$
a^{r-1}b\le \frac{\epsilon (r-1)}{r} a^r
+\frac{1}{\epsilon^{r-1}r} b^r,
\quad
a^{r-2}b^2 \le \frac{\epsilon (r-2)}{r} a^r
+\frac{2}{\epsilon^{(r-1)/2}r} b^r.
$$
By It\^o's formula, $\abs{X_R(t)}^{2p}=\abs{x}^{2p}
+I_1(t)+I_2(t)+I_3(t)+M(t)$, where
\begin{align*}
	&I_1(t)= 2p\int_0^t 
	\sgn\bk{X_R}\abs{X_R}^{2p-1}u_R(s,X_R) \,\d s,
	\\ & I_2(t)= \frac{p}{2} \int_0^t \sgn\bk{X_R} 
	\abs{X_R}^{2p-1} \left(\sigma^2\right)'(s,X_R)\,\d s,
	\\ & I_3(t)= p(2p - 1)
	\int_0^t \abs{X_R}^{2p-2}\sigma^2(s,X_R)\,\d s,
	\\ & 
	M(t)=2p\int_0^t \sgn\bk{X_R}
	\abs{X_R}^{2p-1}\sigma(s,X_R)\,\d W(s).
\end{align*}
Given \eqref{eq:sigma_lip}, we readily derive 
the bounds
\begin{align*}
	&I_1(t)  
	\le t \norm{u_R}^{2p}_{L^\infty([0,T]\times \R)}
	+\tilde{C}_p \int_0^t \abs{X_R}^{2p} \,\d s,
	\\ &
	I_2(t) 
	\le \tilde{C}_p 
	\int_0^t \left(\bk{1+\Lambda(s)} \abs{X_R}^{2p} 
	+ \abs{\left(\sigma^2\right)'(s,0)}^{2p}
	\right)\,\d s,
	\\ &
	I_3(t) 
	\le \tilde{C}_p\int_0^t 
	\bk{1+\Lambda^2(s)} \abs{X_R}^{2p} \,\d s
	+ \tilde{C}_p\int_0^t \abs{\sigma(s,0)}^{2p} \,\d s,
\end{align*}
for a constant $\tilde{C}_p$ depending only $p$. From this 
we obtain the inequality
\begin{align*}
	\abs{X_R(t)}^{2p}&\le \abs{x}^{2p}
	+t\norm{u_R}^{2p}_{L^\infty([0,T]\times\R)} 
	+C_p\int_0^t \abs{\left(\sigma^2\right)'(s,0)}^{2p}\,\d s\\
	&\quad+C_p\int_0^t \abs{\sigma(s,0)}^{2p} \,\d s
	+ C_p \int_0^t \bk{1+\Lambda^2(s)} 
	\abs{X_R(s)}^{2p} \,\d s +M(t),
\end{align*}
for another constant $C_p$ depending only $p$.

For any $N>0$, introduce the stopping time
$$
\tau_N:=\inf\left\{t\in [0,T]:\abs{X_R(t)}>N\right\}.
$$
By the continuity of $X_R$ we have 
that $\tau_N\to T$, $\mathbb{P}$-almost surely, 
as $N\to \infty$. Clearly, for $t\in [0,T]$,
\begin{align*}
	 \abs{X_R\left(t\wedge \tau_N\right)}^{2p} 
	&\le \abs{x}^{2p}
	+\left(t\wedge \tau_N\right)
	\norm{u_R}^{2p}_{L^\infty([0,T]\times\R)} 
	\\ & \quad
	+C_p\int_0^{t\wedge \tau_N} 
	\abs{\left(\sigma^2\right)'(s,0)}^{2p}\,\d s
	+C_p\int_0^{t\wedge \tau_N} 
	\abs{\sigma(s,0)}^{2p}\,\d s
	\\ &\quad
	+C_p \int_0^{t\wedge \tau_N} 
	\bk{1+\Lambda^2(s)} 
	\abs{X_R(s\wedge \tau_N)}^{2p} \,\d s 
	+M(t\wedge \tau_N),
\end{align*}
where $t\mapsto M(t\wedge \tau_N)$ 
is a (square-integrable) martingale starting from zero.  

Using the stochastic Gronwall inequality 
(Lemma \ref{lem:stochastic_gronwall} with 
exponents $\frac12$ and $\frac23$), 
\begin{equation*}
	\begin{aligned}
		&\bk{\Ex \sup_{t \in [0,T]} 
		\abs{X_R\left(t\wedge \tau_N\right)}^p}^{1/2} 
		\\ & \quad 
		\le \bk{\Ex\exp\bk{C_p\int_0^T
		\bk{1+\Lambda^2(t)}\,\d t} }^{1/2} 
		\\ &\qquad  
		\times  \Ex \bk{\abs{x}^{2p}
		+T\norm{u_R}_{L^\infty([0,T]\times \R)}^{2p} 
		+\int_0^T\abs{\left(\sigma^2\right)'(s,0)}^{2p}
		\,\d s+\int_0^T \abs{\sigma(t,0)}^{2p}\,\d t}.
	\end{aligned}
\end{equation*}
Given \eqref{eq:sigma_origin} 
and \eqref{eq:uR_pmoment}, we conclude that
$$
\Ex \sup_{t \in [0,T]}
\abs{X_R\left(t\wedge \tau_N\right)}^p 
\lesssim_{x,T,p} 1.
$$
Finally, sending $N\to \infty$, we 
arrive at \eqref{eq:XR-Lp}.
\end{proof}

To show that $\{X_R\}$ is a Cauchy 
sequence, we will require some compactness properties
of $u_R$ as $R \to \infty$. Since $u_R$ is constructed from
$u$ in an explicit manner, this is not 
difficult to establish:

\begin{lem}\label{lem:uR_convergence}
Suppose $u \in L^p(\Omega;L^\infty([0,T];
\dot{H}^1(\R)))$, for $p\in [1,\infty)$. 
Let $u_R$ be defined by the 
construction \eqref{eq:truncated_u}. 
We have the convergence
\begin{align}\label{eq:uR_limit1}
	\Ex \sup_{t\in [0,T]}
	\abs{u_R(t)-u(t)}_{\dot{H}^1(\R)}^2 
	\overset{R \to \infty}{\longrightarrow} 0.
\end{align}
Moreover, for any finite $p\ge 1$,
\begin{align}\label{eq:uR_limit2}
	\Ex \norm{u_R-u}_{L^\infty([0,T]\times \R)}^p 
	\overset{R \to \infty}{\longrightarrow} 0.
\end{align}
\end{lem}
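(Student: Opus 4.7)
Both convergences reduce to estimates on the one-sided defect $(q(t,y)-R)^+$, where $q := \pd_x u$. Since $\pd_x u_R = \vartheta_R(q) = q - (q-R)^+$, integrating in $x$ and differentiating respectively yield
\[
u(t,x) - u_R(t,x) = \int_{-\infty}^x (q(t,y)-R)^+\,\d y \ge 0,
\qquad
\abs{u_R(t)-u(t)}_{\dot{H}^1(\R)}^2 = \norm{(q(t)-R)^+}_{L^2(\R)}^2.
\]

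First I would establish the $L^\infty$ statement \eqref{eq:uR_limit2}. On $\{q>R\}$ with $R>0$, the pointwise inequality $q-R \le q \le q^2/R$ gives the Chebyshev-type bound $\norm{(q(t)-R)^+}_{L^1(\R)} \le R^{-1}\norm{q(t)}_{L^2(\R)}^2$. Since $u-u_R$ is a nonnegative monotone primitive of $(q-R)^+$, its $L^\infty$ norm is bounded by the total mass, yielding $\norm{u_R-u}_{L^\infty([0,T]\times\R)} \le R^{-1}\norm{u}_{L^\infty([0,T];\dot{H}^1(\R))}^2$. Raising to the $p$-th power, taking expectations, and invoking $u \in L^{2p}(\Omega;L^\infty([0,T];\dot{H}^1(\R)))$ for every finite $p$ produces \eqref{eq:uR_limit2}.

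For \eqref{eq:uR_limit1} I would argue by dominated convergence. For each $(\omega,t)$, $((q(t,y)-R)^+)^2 \downarrow 0$ pointwise in $y$ and is dominated by $q(t,y)^2 \in L^1(\R)$, so DCT in $y$ gives $\abs{u_R(t)-u(t)}_{\dot{H}^1(\R)}^2 \to 0$ pointwise in $(\omega,t)$, uniformly bounded by $\norm{u}_{L^\infty([0,T];\dot{H}^1(\R))}^2 \in L^2(\Omega)$. The hard part is promoting this pointwise-in-$t$ convergence to convergence of the supremum in $t$ under the expectation; indeed, a deterministic example with $q(t,\cdot)$ concentrating on shrinking supports as $t \downarrow 0$ shows this is not automatic from the abstract hypotheses alone. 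I would therefore split $[0,T] = [0,\eps]\cup[\eps,T]$ and exploit the standing hypotheses of the paper: on $[\eps,T]$ the one-sided bound \eqref{eq:stoch_oleinik_g} forces $(q(t)-R)^+ = 0$ whenever $R \ge K(\omega,t)$; on $[0,\eps]$ the decomposition $(q(t)-R)^+ \le (q(0)-R)^+ + \abs{q(t)-q(0)}$ combined with \eqref{eq:u_one-sided_continuity} and $q(0) \in L^2(\Omega\times\R)$ controls the singular regime via DCT and the continuity at $t=0$. Sending $R\to\infty$ first and then $\eps\to 0$ completes the proof.
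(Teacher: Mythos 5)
Your treatment of \eqref{eq:uR_limit2} is correct and is essentially the paper's argument: both rest on the Chebyshev-type pointwise bound $\abs{u(t,x)-u_R(t,x)}\le R^{-1}\norm{q(t)}_{L^2(\R)}^2$, followed by raising to the $p$-th power and using moments of $\norm{q}_{L^\infty([0,T];L^2(\R))}$. This is also the only part of the lemma that is invoked later (in Proposition~\ref{prop:cauchy_seq} and Theorem~\ref{thm:existence}).

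For \eqref{eq:uR_limit1}, you have in fact put your finger on a real problem, and it is a problem in the paper itself. The paper bounds $I_R(t):=\abs{u_R(t)-u(t)}_{\dot H^1(\R)}^2\le 4\int_\R q(t,y)^2\mathds{1}_{\{q(t,y)>R\}}\,\d y$ and then asserts ``$\Ex I_R(t)\to 0$ uniformly in $t$; hence \eqref{eq:uR_limit1} holds.'' This is precisely the interchange of $\sup_t$ and $\Ex$ you flag, and it is not justified: $\sup_t\Ex I_R(t)\to0$ does not yield $\Ex\sup_t I_R(t)\to0$, and in fact neither limit follows from the hypothesis $u\in L^p(\Omega;L^\infty([0,T];\dot H^1(\R)))$ alone. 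The deterministic example $q(t,y)=t^{-1/2}\mathds{1}_{[0,t]}(y)$ (with $q(0)\equiv 0$) has $\norm{q(t)}_{L^2(\R)}=1$ for $t>0$, so $u\in L^\infty([0,T];\dot H^1(\R))$, yet for $0<t<R^{-2}$ one computes $I_R(t)=(1-R\sqrt t)^2$, whence $\sup_{t\in[0,T]}I_R(t)=1$ for every $R$. So \eqref{eq:uR_limit1}, as stated, is false; your skepticism is fully warranted.

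Your proposed repair, however, also does not close the gap. First, it invokes \eqref{eq:stoch_oleinik_g} and \eqref{eq:u_one-sided_continuity}, which are not hypotheses of this lemma. Second, even granting them, the step on $[\eps,T]$ needs $R\ge\sup_{t\in[\eps,T]}K(\omega,t)$, but \eqref{eq:log_divergence2} only gives $\int_\eps^T K(s)\,\d s<\infty$ a.s., not a pointwise bound on $K$; one can arrange a deterministic $q$ satisfying \eqref{eq:stoch_oleinik_g}--\eqref{eq:u_one-sided_continuity} with $K$ unbounded near an interior time, for which $\sup_{t\in[\eps,T]}I_R(t)$ still does not vanish. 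Third, on $[0,\eps]$ the use of \eqref{eq:u_one-sided_continuity} again only controls $\sup_{t\le\eps}\Ex\norm{q(t)-q(0)}_{L^2(\R)}^2$, not $\Ex\sup_{t\le\eps}\norm{q(t)-q(0)}_{L^2(\R)}^2$, so the same $\sup/\Ex$ obstruction resurfaces. The honest conclusion is that \eqref{eq:uR_limit1} should either be dropped (it is unused downstream) or restated with $\sup_t$ and $\Ex$ interchanged and for fixed $t$, which is what the paper's argument actually proves.
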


\begin{proof}
We have that
$\abs{u_R(t)-u(t)}_{\dot{H}^1(\R))}^2$ equals
\begin{align*}
	I_R(t):=\int_\R \abs{\vartheta_R(q(t,y))-q(t,y)}^2\,\d y 
	&= \int_\R \abs{R - q(t,y)}^2 \mathds{1}_{\{q(t,y)>R\}}\,\d y 
	\\ & \le 4 \int_\R \abs{q(t,y)}^2
	\mathds{1}_{\{q(t,y)>R\}}\,\d y.
\end{align*}
Since $\Ex \norm{q}_{L^\infty([0,T];L^2(\R))}^2 \lesssim 1$ 
by assumption, we find that $\Ex I_R(t)$ tends 
to zero as $R \to 0$, uniformly in $t \in [0,T]$; 
hence \eqref{eq:uR_limit1} holds. We also have
\begin{align*}
	\tilde I_R(t):=\abs{u_R(t) - u(t)} 
	& = \abs{\int_{-\infty}^x \bigl(
	\vartheta_R(q(t,y))-q(t,y) \bigr) \,\d y} 
	\\ & \le  \int_{-\infty}^x \abs{R - q(t,y)} 
	\mathds{1}_{\{q(t,y)>R\}} \,\d y 
	\le \frac{1}{R}\norm{q(t)}_{L^2(\R)}^2.
\end{align*}
By assumption, for all $p\ge 1$ we 
have $q\in L^p(\Omega; L^\infty([0,T]; L^2(\R)))$ 
and therefore $\Ex \left(\sup_{t \in [0,T]}
\tilde I_R(t)\right)^p 
\overset{R \to \infty}{\longrightarrow} 0$. 
This proves the claim \eqref{eq:uR_limit2}.
\end{proof}

The next result, which is the main contribution of this 
section, reveals that $\{X_R\}$ is a 
Cauchy sequence in $L^{1/2}(\Omega;C([0,T]))$.

\begin{prop}\label{prop:cauchy_seq}
Under the assumptions of Lemma \ref{lem:existence_lemma_R}, 
suppose in addition that \eqref{eq:sigma_exp} is true 
and also that \eqref{eq:sigma_origin} 
holds with $p=2$. The solutions $X_R$ 
to \eqref{eq:X_sde_R}, which satisfy 
the $R$-independent bound $\Ex\sup_{t\in [0,T]}
\abs{X_R(t)}\lesssim_{T,x} 1$ 
(cf.~Lemma \ref{lem:X_R_pmoment}), 
form a sequence $\{X_R\}$ that is Cauchy 
in $L^{1/2}(\Omega;C([0,T]))$.
\end{prop}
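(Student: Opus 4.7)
My approach is to adapt the pathwise uniqueness argument of Theorem \ref{thm:pathwise_uniqueness} to compare two members $X_{R_1}$, $X_{R_2}$ of the truncated family, exploiting three structural observations: (a) both start at $x$, so $Y(0):=\abs{X_{R_1}(0)-X_{R_2}(0)}=0$; (b) the truncation $\vartheta_R$ from \eqref{eq:truncated_u} satisfies $\abs{\vartheta_R(q)}\le\abs{q}$ and (crucially) inherits the one-sided bound $\vartheta_R(q(\omega,t,\cdot))\le K(\omega,t)$ from \eqref{eq:stoch_oleinik_g}, so every $q$-estimate used in the uniqueness proof remains valid uniformly in $R$; (c) Lemma~\ref{lem:uR_convergence} supplies the vanishing control of the approximation error $\Ex \norm{u_{R_1}-u_{R_2}}_{L^\infty([0,T]\times\R)}$.

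The first step is to apply Tanaka's formula to $Y(t)=\abs{X_{R_1}(t)-X_{R_2}(t)}$, noting as in the uniqueness proof that the local-time correction vanishes by \eqref{eq:sigma_lip}, and to split the drift difference as
$$
u_{R_1}(t,X_{R_1})-u_{R_2}(t,X_{R_2})=\bigl[u_{R_1}(t,X_{R_1})-u_{R_1}(t,X_{R_2})\bigr]+\bigl[u_{R_1}(t,X_{R_2})-u_{R_2}(t,X_{R_2})\bigr].
$$
The first bracket integrates $\vartheta_{R_1}(q)$ from $X_{R_2}$ to $X_{R_1}$ and replicates the structure of \eqref{eq:difference_sde}, thanks to observation (b); the second bracket is a forcing term $\mathcal E(t):=\int_0^t\abs{u_{R_1}-u_{R_2}}(s,X_{R_2}(s))\,\d s$ whose expectation is controlled by $T\cdot\Ex\norm{u_{R_1}-u_{R_2}}_{L^\infty([0,T]\times\R)}$ via \eqref{eq:uR_limit2}.

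Next I would run the short-time analysis from the uniqueness proof essentially verbatim to get $\sup_{t\in[0,\ep]}\Ex Y(t)\lesssim \ep^2+\Ex\mathcal{E}(\ep)$, then re-expand the drift integral splitting over $[0,\ep]$ and $[\ep,t]$ to produce a differential inequality $\d Y\le \eta_\ep^{R_1,R_2}\,\d t+Y\,\d A_\ep+\d M$ in which $\eta_\ep^{R_1,R_2}$ now absorbs both the short-time $q$-contribution (bounded by $\eps^2\rho(\eps)$) and the pointwise approximation error $\abs{u_{R_1}-u_{R_2}}$. Applying Lemma~\ref{lem:stochastic_gronwall} with $p=1/2$ and a suitable $r\in(1/2,1)$, and reusing the exponential-moment bound $\Ex\exp(\mu A_\ep(T))\lesssim \ep^{-2\mu}$ as in \eqref{eq:gronwallbound1}, I expect to arrive at
$$
\bk{\Ex\sup_{s\in[0,T]}Y^{1/2}(s)}^2\lesssim \rho(\ep)+\ep^{-2}\,\Ex\norm{u_{R_1}-u_{R_2}}_{L^\infty([0,T]\times\R)}.
$$

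The main obstacle is the competing dependence on $\ep$: eliminating the first term requires $\ep\to 0$, while the explicit $\ep^{-2}$ in the Cauchy remainder demands that the approximation error be driven to zero \emph{after} $\ep$ has been fixed. This is handled by a two-stage argument: given $\delta>0$, I first choose $\ep$ small so that $\rho(\ep)<\delta/2$, invoking \eqref{eq:u_one-sided_continuity} and \eqref{eq:sigma_one-sided_tcont} as in the uniqueness proof; then I apply \eqref{eq:uR_limit2} to select $R_0$ so large that $\ep^{-2}\Ex\norm{u_{R_1}-u_{R_2}}_{L^\infty}<\delta/2$ for all $R_1,R_2\ge R_0$. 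This yields the Cauchy property of $\{X_R\}$ in $L^{1/2}(\Omega;C([0,T]))$.
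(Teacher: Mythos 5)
Your proposal is correct and takes essentially the same approach as the paper: the same Tanaka/difference set-up, the same splitting of $u_{R_1}(t,X_{R_1})-u_{R_2}(t,X_{R_2})$ into a monotone part (using that $\vartheta_R(q)\le K$) plus an approximation error placed inside $\eta$ rather than $A$, and the same stochastic Gronwall step followed by the two-stage choice ``fix $\ep$, then fix $R_0(\ep)$''. The only cosmetic difference is that the paper absorbs the $R_0(\ep)$-dependence already in the short-time estimate by demanding $\Ex\norm{u_R-u_{R'}}_{L^\infty}\le\ep^{5/2}$ so the bound reads $\ep^2 o(1)$, whereas you keep the error explicit through the Gronwall step and eliminate it at the end, which is an equivalent bookkeeping of the identical argument.
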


\begin{proof}
For $N,R,R'>0$, define
$$
\tau_N^{R,R'}:=\inf\left\{t\in[0,T]:
\abs{X_R(t)}>N \mbox{ or }
\abs{X_{R'}(t)}>N \right\}.
$$
Replace $X_R$ by $\tilde{X}_R(t)
:=X_R\left(t\wedge \tau_N^{R,R'}\right)$,
which satisfies $\abs{\tilde{X}_R(t)}\le N$ for all
$t\in \left[0,\tau_N^{R,R'}\right]$.
The SDE for $\tilde{X}_R$ becomes
\begin{align*}
\tilde{X}_R(t)
& = x+\int_0^t u_R\left(s,\tilde{X}_R(s)\right)
\,\d s +\frac14 \int_0^t
\left(\sigma^2\right)'\left(s,\tilde{X}_R(s)\right)
\,\d s \\ &\quad +\int_0^t
\sigma\left(s,\tilde{X}_R(s)\right)\, \d W(s),
\quad t\in \left[0,\tau_N^{R,R'}\right].
\end{align*}
Applying the upcoming argument to
$\tilde{X}_R-\tilde{X}_{R'}$ on
the time interval $\left[0,\tau_N^{R,R'}\right]$,
where $\tilde{X}_{R'}(\cdot)
:=X_{R'}\left(\cdot\wedge \tau_N^{R,R'}\right)$,
we deduce that for any $\delta>0$ there
exists $R_0=R_0(\delta)$ such that, for
all $t\in [0,T]$,
\begin{equation*}
\Ex \sup_{s \in [0,t]}
\abs{X_R\left(s\wedge \tau_N^{R,R'}\right)
-X_{R'}\left(s\wedge \tau_N^{R,R'}\right)}^{1/2}
<\delta, \quad \text{for all $R,R'\ge R_0$},
\end{equation*}
see \eqref{eq:Cauchy_final}. To conclude from this,
one notices that $\tau_N^{R,R'}\to T$ as $N\to \infty$,
uniformly in $R,R'$. Indeed, the $R$-independent bound (Lemma \ref{lem:X_R_pmoment})
$\Ex\sup\limits_{t\in [0,T]} \abs{X_R}\lesssim 1$ implies
$$
\Px\left(\tau_N^{R,R'}<T\right)
\le \Px\left(\sup_{t\in \left[0,\tau_N^R\right]}
\abs{X_R(t)}\ge N,\, \tau_N^R<T\right)
\le \frac{1}{N} \Ex \sup_{t\in [0,T]}
\abs{X_R(t)} \to 0,
$$
as $N\to \infty$, uniformly in $R$. Hence,
$\tau_N^{R,R'}\to T$ as $N\to \infty$,
uniformly in $R,R'$.

Given the preceding discussion, in what follows, 
there is no loss of generality in assuming that 
\begin{equation}\label{eq:N-bound2}
	\abs{X_R(t)}, \abs{X_{R'}(t)}\le N, 
	\quad \text{for all $t\in [0,T]$},
\end{equation}
for some given $N>0$, when seeking to establish that
$$
Y(t)=Y_{R,R'}(t):=\abs{X_R(t) - X_{R'}(t)}, 
\qquad R,R'\in [0,\infty),
$$
satisfies the Cauchy property \eqref{eq:Cauchy_final}. 
The Tanaka formula gives 
\begin{equation}\label{eq:XR_tanaka}
	\begin{aligned}
		Y(t)&= \int_0^t \sgn(X_R - X_{R'}) 
		\bk{u_R(s,X_R)-u_{R}(s,X_{R'})} \,\d s 
		\\ & \quad
		+ \int_0^t \sgn(X_R - X_{R'}) 
		\bk{u_{R}(s,X_{R'}) - u_{R'}(s,X_{R'})}\,\d s 
		\\ &\quad
		+ \frac14 \int_0^t \sgn(X_R-X_{R'}) 
		\bk{ \left(\sigma^2\right)'(s,X_R)
		- \left(\sigma^2\right)'(s,X_{R'})}\,\d s  
		\\ &\quad
		+\int_0^t \sgn(X_R-X_{R'})
		\bk{ \sigma(s,X_R)-\sigma(s,X_{R'}) }\,\d W(s).
	\end{aligned}
\end{equation}
This is very similar to \eqref{eq:difference_sde}, 
except for the difference $u_R(s,X_{R'})
-u_{R'}(s,X_{R'}) $.

First we seek to estimate $Y(t)$ over a short 
time period $t \in [0,\ep]$.  In \eqref{eq:XR_tanaka}, as 
in the previous section, we write 
\begin{align*}
	&\int_0^t \sgn\bk{X_R-X_{R'}}
	\bk{u_R(s,X_R)-u_{R}(s,X_{R'}) }\,\d s 
	\\ &\quad\qquad
	+ \frac14 \int_0^t \sgn\bk{X_R-X_{R'}} 
	\bk{ \left(\sigma^2\right)'(s,X_R)
	- \left(\sigma^2\right)'(s,X_{R'})}\,\d s
	\\ & \quad 
	= \int_0^t \sgn\bk{X_R-X_{R'}} 
	\int_{X_{R'}(s)}^{X_R(s)}
	\vartheta_R(q(s,y))\,\d y\,\d s 
	\\ & \qquad
	+ \frac14 \int_0^t \sgn\bk{X_R-X_{R'}}  
	\int_{X_{R'}(s)}^{X_R(s)}\bk{\bk{\sigma^2}''(s,y) 
	-\bk{\sigma^2}''(0,y)}\,\d y\,\d s 
	\\ & \qquad
	+ \frac14 \int_0^t 
	\sgn\bk{X_R-X_{R'}}\int_{X_{R'}(s)}^{X_R(s)}
	\bk{\sigma^2}''(0,y)\,\d y\,\d s.
\end{align*}
Estimating by the Cauchy--Schwarz inequality,
\begin{equation}\label{eq:Yestimate1}
	\begin{aligned}
		Y(t) & \le   \int_0^t \sgn\bk{X_R - X_{R'}}
		\bk{u_R(s,X_{R'})-u_{R'}(s,X_{R'})}\,\d s
		\\ &\quad 
		+\int_0^t  Y^{1/2}(s) 
		\norm{q(s)}_{L^2(\Delta_s)}\,d s
		\\ &\quad
		+ \frac14  \int_0^t  Y^{1/2}(s) 
		\norm{ \bk{\sigma^2}''(s)
		-\bk{\sigma^2}''(0)}_{L^2(\Delta_s)}\,\d s
		\\ & \quad
		+\frac14 \int_0^t 
		\norm{\bk{\sigma^2}''(0)}_{L^\infty(\Omega \times \R)} 
		Y(s) \,\d s
		+\int_0^t \phi_\sigma(s) Y(s)\,\d W(s),
	\end{aligned}
\end{equation}
where $\phi_\sigma(t) := \bk{ \sigma(t,X_R)
-\sigma(t,X_{R'}) }/Y(t)$ is a 
process bounded in absolute value by $\Lambda(t)$ of \eqref{eq:sigma_lipschitz}. Here, $\Delta_s$ denotes 
the (random) interval
$$
\Delta_s = \Bigl[X_R(s) \wedge X_{R'}(s),\, 
X_R(s) \vee X_{R'}(s)\Bigr].
$$

Given \eqref{eq:N-bound2}, 
the last term in \eqref{eq:Yestimate1} is 
a square-integrable martingale starting from zero. 
Taking the expectation, and estimating as in the proof of  
Theorem~\ref{thm:pathwise_uniqueness},
\begin{align*}
	\Ex Y(t) & \le   
	t \Ex \norm{u_R-u_{R'}}_{L^\infty([0,t]\times \R)}
	\\ & \quad
	+ \int_0^t \bk{\Ex Y(s)}^{1/2} 
	\bk{\Ex\norm{q(s)}_{L^2(\Delta_s)}^2}^{1/2}\,d s
	\\ &\quad
	+ \frac14  \int_0^t \bk{\Ex Y(s)}^{1/2} 
	\bk{\Ex \norm{\bk{\sigma^2}''(s) 
	-\bk{\sigma^2}''(0)}_{L^2(\Delta_s)}^2}^{1/2}\,\d s
	\\ & \quad
	+\frac14 \int_0^t 
	\norm{\bk{\sigma^2}''(0)}_{L^\infty(\Omega \times \R)}
	\Ex Y(s) \,\d s.
\end{align*}

Taking the supremum over $t\in [0,\ep]$, and 
applying Young's inequality (in the form 
$ab = \left(\frac{1}{\sqrt{2\eps}}a\right) 
\left(\sqrt{2\eps}b\right)
\le \frac{1}{4\eps}a^2+\eps b^2$), we find
\begin{align*}
	\sup_{t \in [0,\ep]}\Ex Y(t) 
	& \le \ep \Ex \norm{u_R-u_{R'}}_{L^\infty([0,\ep]\times \R)}
	\\ &\quad 
	+\frac14 \sup_{t \in [0,\ep]}\Ex Y(t) 
	+\ep^2\sup_{t \in [0,\ep]}\Ex\norm{q(s)}_{L^2(\Delta_s)}^2
	\\ &\quad
	+\frac{1}{16} \sup_{t \in [0,\ep]}\Ex Y(s) 
	+\frac{\ep^2}{4} \sup_{t \in [0,\ep]}
	\Ex\norm{ \bk{\sigma^2}''(s) 
	-\bk{\sigma^2}''(0)}_{L^2(\Delta_s)}^2
	\\ & \quad
	+\frac{\ep}{4}
	\norm{\bk{\sigma^2}''(0)}_{L^\infty(\Omega \times \R)} 
	\sup_{t \in [0,\ep]}\Ex Y(t).
\end{align*}

In what follows, we fix $\ep$ so small that 
$\frac14+\frac{1}{16}+\frac{\ep}{4}
\norm{\bk{\sigma^2}''(0)}_{L^\infty(\Omega \times \R)}
\le \frac12$. Since $\ep$ and $R, R'$ are 
independent parameters, given \eqref{eq:uR_limit2} of Lemma~\ref{lem:uR_convergence}, 
we can take $R_0=R_0(\eps)$ so large that
\begin{align}
	\Ex \norm{u_{R}-u_{R'}}_{L^\infty([0,\ep]\times \R)}
	&\le \Ex\norm{u_{R}-u_{R'}}_{L^\infty([0,T]\times \R)} 
	\notag \\
	&\le \ep^{5/2}=\eps^2 o(1), 
	\quad \text{as $\eps\to0$}, 
	\label{eq:ep_squared3}
\end{align}
for all $R,R'\ge R_0(\eps)$. This gives us
\begin{align*}
	\sup_{t \in [0,\ep]}\Ex Y(t) 
	&\le 2\ep^2\Biggl(\ep^{3/2}
	+\sup_{t \in [0,\ep]}\Ex\norm{q(s)}_{L^2(\R)}^2
	\\ & \qquad \qquad \qquad \quad
	+\sup_{t \in [0,\ep]}\Ex \norm{\bk{\sigma^2}''(s) 
	-\bk{\sigma^2}''(0)}_{L^2(\R)}^2\Biggr).
\end{align*}

Importantly, from \eqref{eq:sigma_one-sided_tcont} and
\eqref{eq:one-sided_continuity-qest} we conclude that
\begin{equation}\label{eq:ep_squared2}
	\sup_{t\in [0,\ep]}\Ex Y(t)
	=\ep^2 o(1), \quad \text{as $\eps\to0$}.
\end{equation} 

As in the proof of Theorem \ref{thm:pathwise_uniqueness}, 
we estimate $Y$ again (this time on the entire 
time interval $[0,T]$). From \eqref{eq:Yestimate1}, 
we arrive at the integral inequality
\begin{equation*}
	Y(t) \le \int_0^t \eta(s) \,\d s
	+\int_0^t Y(s) \,\d A(s) + M(t),
\end{equation*}
where, for $t \in [0,T]$,
\begin{align*}
	M(t) &:=\int_0^t \phi_\sigma(s) Y(s)\,\d W, \\
	\eta(t) & := \mathds{1}_{\{t \le \ep\}} 
	Y^{1/2}(s) \norm{q(s)}_{L^2(\Delta_s)}
	+T\norm{u_R-u_{R'}}_{L^\infty([0,T]\times \R)},
\intertext{and, as in \eqref{eq:gronwall_functions3},}
	A(t)&:=\int_0^t\bk{\mathds{1}_{\{s\ge \ep\}} 
	K(s)+\Lambda(s) }\,\d s.
\end{align*}

Since we have not assumed an 
exponential moment bound for the difference 
$\norm{u_R-u_{R'}}_{L^\infty\bk{[0,T] \times \R}}$, it 
becomes imperative to include this term as 
a part of $\eta$ and not $A$. 
The process $\eta$ is non-negative and, 
by \eqref{eq:sigma_one-sided_tcont}, \eqref{eq:ep_squared2} 
and \eqref{eq:ep_squared3}, is controlled thus:
\begin{equation}\label{eq:est-of-eta}
	\Ex \int_0^t \eta(s)\,\d s 
	= \ep^2 o(1), 
	\quad \text{as $\eps\to0$}.
\end{equation}

Now we apply Lemma~\ref{lem:stochastic_gronwall},  
the stochastic Gronwall inequality
with $p = \frac12$ and a 
suitable $r\in \left(\frac12,1\right)$. 
In view of \eqref{eq:stoch_oleinik_g} 
and \eqref{eq:est-of-eta}, 
\begin{equation*}
	\bk{\Ex \sup_{s \in [0,t]} 
	Y^{1/2}(t)}^{2}
	\le C_r e^{C_\sigma (t-\ep)} 
	\ep^{-2}
	\ep^2 o(1)
	= o(1), \quad \text{as $\eps\to0$}.
\end{equation*}

Therefore, given any $\delta>0$, we can find 
$\eps=\eps(\delta)$ and $R_0=R_0(\delta):=
R_{\ep(\delta)}\vee R'_{\ep(\delta)}$ 
such that, for all $t\in [0,T]$, 
\begin{equation}\label{eq:Cauchy_final}
	\Ex \sup_{s \in [0,t]} 
	Y^{1/2}_{R,R'}(s) <\delta, 
	\quad \forall R,R'\ge R_0.
\end{equation}
This concludes the proof of the proposition.
\end{proof}

Proposition \ref{prop:cauchy_seq} 
implies convergence in probability.

\begin{lem}\label{lem:conv-in-prob}
Under the assumptions of Lemma \ref{lem:existence_lemma_R}, 
suppose in addition that \eqref{eq:sigma_exp} is true and 
also that \eqref{eq:sigma_origin} 
holds with $p=2$. Then there exists a 
$\mathbb{P}$-almost surely continuous and 
$\{\mathcal{F}_t\}_{t \ge 0}$-adapted stochastic 
processes $X:\Omega\times [0,\infty)\to \R$ such that
\begin{equation}\label{eq:conv-in-prob}
	\lim_{R\to \infty} \Px\left( 
	\sup_{t\in[0,T]}\abs{X_R(t)-X(t)}
	>\ep\right)=0,
\end{equation}
for all $\ep>0$, for all finite $T>0$.
\end{lem}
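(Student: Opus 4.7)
The strategy is to invoke completeness of the metric space $(L^{1/2}(\Omega; C([0,T])), d)$, where
$$
d(X_1, X_2) := \Ex \sup_{t\in[0,T]} |X_1(t) - X_2(t)|^{1/2},
$$
as stated in \cite[4.7.62]{Bog2007}. Proposition~\ref{prop:cauchy_seq} exhibits $\{X_R\}$ as a Cauchy sequence in this complete space, so for each fixed finite $T>0$ there exists a limit $X=X^{(T)}$ to which $\{X_R\}$ converges in $d$; in particular, $X$ has $\Px$-almost surely continuous trajectories on $[0,T]$.

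Next I would verify $\{\mathcal{F}_t\}$-adaptedness. Extract a subsequence $\{R_k\}$ with $d(X_{R_k}, X) \le 4^{-k}$, so that Markov's inequality gives $\Px\bigl(\sup_{t\in[0,T]}|X_{R_k}(t) - X(t)| > 2^{-k}\bigr) \le 2^{-3k/2}$, which is summable. The Borel--Cantelli lemma then yields $\sup_{t\in[0,T]}|X_{R_k}(t) - X(t)| \to 0$ almost surely. Since each $X_{R_k}(t)$ is $\mathcal{F}_t$-measurable and the filtration is complete, the almost-sure pointwise limit $X(t)$ is $\mathcal{F}_t$-measurable, so $X$ is adapted.

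The convergence statement \eqref{eq:conv-in-prob} then follows directly from Markov's inequality applied to the $d$-convergence of the entire sequence (not a subsequence):
$$
\Px\Bigl(\sup_{t\in[0,T]}|X_R(t) - X(t)| > \ep\Bigr)
\le \ep^{-1/2}\, d(X_R, X) \longrightarrow 0, \quad R\to\infty.
$$
To accommodate \emph{all} finite $T$, I would construct $X^{(n)}$ on each $[0,n]$, $n\in\N$, and observe that uniqueness of $d$-limits on $L^{1/2}(\Omega; C([0,n]))$, applied to the restriction of $X^{(n+1)}$, ensures $\Px$-almost sure agreement on overlapping intervals; gluing produces a single continuous adapted process $X:\Omega\times[0,\infty)\to\R$ with the required properties. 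The only non-cosmetic point is the translation from convergence in the quasi-metric $d$ (which controls only the fractional moment $\Ex(\cdot)^{1/2}$) to pointwise $\mathcal{F}_t$-measurability of the limit, which is handled by the subsequence extraction above.
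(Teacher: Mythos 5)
Your proof is correct, and it takes a genuinely different (though related) route to the paper's. The paper's actual proof of this lemma first applies Chebyshev's inequality to the Cauchy estimate from Proposition~\ref{prop:cauchy_seq} to conclude that $\{X_R\}$ is Cauchy with respect to locally uniform convergence in probability, and then invokes completeness of the space of continuous (adapted) processes under that mode of convergence to produce $X$. You instead invoke completeness of the metric space $L^{1/2}(\Omega;C([0,T]))$ itself to obtain the limit, and only afterwards pass to convergence in probability via Markov's inequality. Both completeness statements are standard and the two arguments are essentially reformulations of one another, but your version makes two things more explicit that the paper leaves implicit: the verification that the limit is $\{\mathcal{F}_t\}$-adapted (handled cleanly by your subsequence plus Borel--Cantelli argument, exploiting completeness of the filtration), and the gluing over the intervals $[0,n]$ to obtain a single process on $[0,\infty)$ via uniqueness of limits. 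These are worthwhile clarifications; the paper's shorter proof effectively bundles them into the assertion that the relevant space of processes is complete. One small cosmetic point: what you call Markov's inequality the paper calls Chebyshev's inequality; both names are in common use for the first-moment tail bound and the computation is identical.
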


\begin{proof}
By Chebyshev's inequality 
and Proposition \ref{prop:cauchy_seq}, we obtain 
\begin{align*}
	&\Px\left( 
	\sup_{t\in[0,T]}\abs{X_{R}(t)-X_{R'}(t)}
	>\ep\right)\le \frac{1}{\sqrt{\varepsilon}}
	\Ex \sup_{t \in [0,T]} \abs{X_{R}(t)-X_{R'}(t)}^{1/2}
	\overset{R,R' \to \infty}{\longrightarrow} 0,
\end{align*}
so that $\{X_R\}$ is a Cauchy sequence in the 
space of continuous processes with 
respect to locally (in $t$) uniform 
convergence in probability. Since this 
space is complete, the lemma follows.  
\end{proof}

It remains to identify the limit $X$ as a 
solution to the original SDE \eqref{eq:X_sde_a}.

\begin{thm}[Existence of solution]\label{thm:existence}
Under the assumptions of Theorem~\ref{thm:main}, there 
exists a strong solution $X$ to the SDE \eqref{eq:X_sde_a}.
\end{thm}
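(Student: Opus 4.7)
The plan is to show that the limit process $X$ from Lemma \ref{lem:conv-in-prob} satisfies \eqref{eq:X_sde_a} by passing to the limit $R\to\infty$ in the integral version of \eqref{eq:X_sde_R}. The adaptedness and $\mathbb{P}$-a.s.\ continuity of $X$ have already been supplied by Lemma \ref{lem:conv-in-prob}. Combining Fatou's lemma with the $R$-independent bound of Lemma \ref{lem:X_R_pmoment} yields $\Ex \sup_{t\in [0,T]}\abs{X(t)}^p \lesssim_{x,T,p} 1$ for every finite $p$, so both $\{X_R\}$ and $X$ are uniformly bounded in $L^p(\Omega;C([0,T]))$. Upgrading the convergence in probability \eqref{eq:conv-in-prob} via a Vitali--de la Vall\'ee Poussin argument, one obtains $X_R\to X$ in $L^q(\Omega; C([0,T]))$ for every finite $q$.

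I would then pass to the limit in each term. For the drift, the triangle inequality gives
$$
\abs{u_R(s,X_R)-u(s,X)} \le \norm{u_R-u}_{L^\infty([0,T]\times\R)} + \abs{u(s,X_R)-u(s,X)},
$$
where the first term vanishes in $L^p(\Omega)$ by \eqref{eq:uR_limit2}, while the second is controlled by $\abs{u(s)}_{\dot{H}^1(\R)}\abs{X_R-X}^{1/2}$ via the one-dimensional embedding $\dot{H}^1(\R)\hookrightarrow C^{1/2}(\R)$; a Cauchy--Schwarz estimate together with the $L^q$-convergence of $X_R$ to $X$ then yields convergence of $\int_0^t u_R(s,X_R)\,\d s$ to $\int_0^t u(s,X)\,\d s$ in $L^1(\Omega)$. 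For the $\tfrac14(\sigma^2)'$-integral, the Lipschitz assumption \eqref{eq:sigma_lip} together with \eqref{eq:sigma_lipschitz} and Cauchy--Schwarz again delivers $L^1(\Omega)$-convergence to $\tfrac14\int_0^t \left(\sigma^2\right)'(s,X)\,\d s$.

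The main obstacle will be the stochastic integral. Here the It\^o isometry reduces the matter to
$$
\Ex \int_0^T \abs{\sigma(s,X_R)-\sigma(s,X)}^2\,\d s \le \Ex \int_0^T \Lambda^2(s)\abs{X_R(s)-X(s)}^2\,\d s \to 0.
$$
Splitting the last expectation via Cauchy--Schwarz, the exponential moment \eqref{eq:sigma_exp} furnishes $\Lambda\in L^q(\Omega\times[0,T])$ for every finite $q$, and combining this with the $L^q$-convergence of $X_R$ to $X$ established in the first paragraph delivers the desired limit. The stochastic integral therefore converges in $L^2(\Omega)$, and passing to the limit (along a further a.s.\ convergent subsequence if needed) in each term of \eqref{eq:X_sde_R} shows that $X$ solves \eqref{eq:X_sde_a} $\mathbb{P}$-a.s. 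Pathwise uniqueness from Theorem \ref{thm:pathwise_uniqueness} identifies the limit unambiguously, completing the proof.
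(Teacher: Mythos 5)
Your overall plan is sound and genuinely different from the paper's route. The paper passes to an a.s.\ convergent subsequence, works pathwise with the decomposition $I = I^{(1)}_R + I^{(2)}_R + I^{(3)}_R + M_R$, and in particular localizes the stochastic integral with a stopping time $\tau_N$ based on $S(t)=\sup_n \sup_{s\le t}\abs{X_{R_n}(s)}$ before invoking dominated convergence and Chebyshev to obtain convergence in probability of $M_{R_n}(t)$. You instead upgrade the convergence in probability from Lemma~\ref{lem:conv-in-prob} to $L^q(\Omega;C([0,T]))$ convergence via the $R$-uniform $L^p$ bound of Lemma~\ref{lem:X_R_pmoment} and Vitali's theorem, which lets you treat all three integral terms with moment estimates and bypass the stopping-time step entirely. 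This is a clean and valid alternative; the drift decomposition and $\dot H^1\hookrightarrow C^{1/2}$ estimate match the paper's in substance.

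There is, however, one real error in the stochastic integral step: the claim that \eqref{eq:sigma_exp} gives $\Lambda\in L^q(\Omega\times[0,T])$ for every finite $q$ is false. The hypothesis controls moments of $\int_0^T\Lambda^2\,\d t$, not of $\int_0^T\Lambda^q\,\d t$; for instance $\Lambda(\omega,t)=c(\omega)t^{-1/4}$ satisfies \eqref{eq:sigma_exp} when $c$ has all exponential moments, yet $\int_0^T\Lambda^4\,\d t=\infty$. The Cauchy--Schwarz split you intend (in $\omega\times[0,T]$) therefore does not close. The fix is to take the supremum in $s$ out of the time integral first and only then apply Cauchy--Schwarz in $\omega$:
\begin{align*}
\Ex\int_0^T\Lambda^2(s)\abs{X_R(s)-X(s)}^2\,\d s
&\le \Ex\Big[\Big(\sup_{s\le T}\abs{X_R(s)-X(s)}^2\Big)\int_0^T\Lambda^2(s)\,\d s\Big]\\
&\le \Big(\Ex\sup_{s\le T}\abs{X_R(s)-X(s)}^4\Big)^{1/2}\Big(\Ex\Big(\int_0^T\Lambda^2(s)\,\d s\Big)^2\Big)^{1/2},
\end{align*}
and the second factor is finite by \eqref{eq:sigma_exp} (since $e^x\ge x^2/2$), while the first factor vanishes by your $L^4$ convergence. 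With that correction the argument is complete.
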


\begin{proof}
Fix a finite number $T>0$. 
By Lemma \ref{lem:existence_lemma_R}, 
there exists a unique strong solution $X_R$ 
to the SDE \eqref{eq:X_sde_R}, such that
$$
X_R(t)=x+\int_0^t u_R (s,X_R)\,\d s 
+\frac14\int_0^t \left(\sigma^2\right)'(s,X_R)\,\d s 
+\int_0^t \sigma(s,X_R)\,\d W(s).
$$
Let $X$ be the limit process constructed in 
Lemma \ref{lem:conv-in-prob}. Then
\begin{align*}
	& I(t):=X-x-\int_0^t u (s,X)\,\d s 
	-\frac14\int_0^t \left(\sigma^2\right)'(s,X)\,\d s 
	-\int_0^t \sigma(s,X)\,\d W(s)
	\\ & \qquad 
	= I^{(1)}_R(t)+I^{(2)}_R(t)
	+ I^{(3)}_R(t)+M_R(t),
\end{align*}
where
\begin{align*}	
	& I^{(1)}_R(t)=X(t)-X_R(t), 
	\quad
	I^{(2)}_R(t)=\int_0^t \left(u_R(s,X_R)- u(s,X)
	\right)\,\d s, 
	\\ & 
	I^{(3)}_R(t)=\frac14 \int_0^t 
	\left(
	\left(\sigma^2\right)'(s,X_R) 
	-\left(\sigma^2\right)'(s,X)\right)\,\d s,
	\\ &
	M_R(t)=\int_0^t \bigl(\sigma(s,X_R)
	-\sigma(s,X)\bigr)\,\d W(s). 
\end{align*}
Because of the path continuity of $X$, it is enough
to prove that $I(t)=0$ $\mathbb{P}$-almost 
surely, for any fixed $t\in [0,T]$. 
To this end, we will verify that
$$
I^{(1)}_R(t),\, I^{(2)}_R(t),\, I^{(3)}_R(t), \, M_R(t) 
\overset{R \to \infty}{\longrightarrow} 0, 
\quad \text{$\mathbb{P}$-a.s.},
$$ 
at least for some subsequence 
$R_n\to 0$ as $n\to \infty$.

Since convergence in probability, cf.~\eqref{eq:conv-in-prob}, 
implies almost sure convergence along a subsequence, we have
\begin{equation}\label{eq:conv-as}
	\sup_{t\in[0,T]}\abs{X_{R_n}(t)-X(t)}
	\overset{n\to\infty}{\longrightarrow} 0, 
	\quad  \text{$\mathbb{P}$-a.s.},
\end{equation}
which implies that $I^{(1)}_{R_n}(t)\to 0$, 
$\mathbb{P}$-almost surely, as $n\to \infty$.

Given \eqref{eq:uR_limit2}, 
we have that
$$
\norm{u_{R_n}-u}_{L^\infty([0,T]\times \R)}
\overset{n\to\infty}{\longrightarrow} 0,
\quad  \text{$\mathbb{P}$-a.s.}
$$
Using this and the $\mathbb{P}$-almost sure bound
$\norm{q}_{L^\infty([0,T];L^2(\R))}^2<\infty$, 
we obtain
\begin{align*}
	\abs{I^{(2)}_{R_n}(t)}	
	&\le \abs{\int_0^t\int_{X_{R_n}}^{X} 
	\vartheta_{R_n}(q(s,y))\,\d y\,\d s} 
	+\abs{\int_0^t  \left(u_{R_n}(s,X)-u(s,X)\right)\,\d s}
	\\ & \le T\bk{\sup_{s\in [0,T]}
	\abs{X_{R_n}(s)-X(s)}}^{1/2}
	\bk{\norm{q}_{L^\infty([0,T];L^2(\R))}^2}^{1/2}
	 \\ & \quad\qquad\qquad 
	+T\norm{u_{R_n}-u}_{L^\infty([0,T]\times \R)}
	\overset{n \to \infty}{\longrightarrow} 0,
	\quad  \text{$\mathbb{P}$-a.s.}
\end{align*}

By \eqref{eq:sigma_lip}, \eqref{eq:sigma_lipschitz}, 
and \eqref{eq:conv-as},
\begin{align*}
	\abs{I^{(3)}_{R_n}(t)}
	& \le \int_0^T \Lambda(s)
	\abs{X_{R_n}(s)-X(s)}\,\d s
	\\ & \le 
	\Big({\int_0^T \Lambda(s)\,\d s}\Big)
	\sup_{s\in [0,T]} \abs{X_{R_n}(s)-X(s)}
	\overset{n\to\infty}{\longrightarrow} 0, 
	\quad  \text{$\mathbb{P}$-a.s.}
\end{align*}

Recall the bound $\Ex\sup_{t\in [0,T]}
\abs{X_R(t)}\lesssim_{T,x} 1$, which holds 
uniformly in $R$ (here we need \eqref{eq:sigma_exp} 
and \eqref{eq:sigma_origin} 
with $p=2$). Set $S(t):=\sup_{n\in \N} 
\sup_{s\in [0,t]}\abs{X_{R_n}(s)}$, which 
is bounded, $\mathbb{P}$-almost surely. 
For $N\in [0,\infty)$, introduce the stopping time
$$
\tau_N := \inf\left\{t\in [0,T]:S(t)>N \right\}.
$$
Clearly, $\Px\bigl(\tau_N<t\bigr)\to 0$ as $N\to \infty$. 
By \eqref{eq:sigma_lip}, \eqref{eq:sigma_lipschitz}, 
\eqref{eq:conv-as} and the dominated 
convergence theorem,
\begin{align*}
	&\Ex \abs{\int_0^{t\wedge\tau_N}
	\bigl(\sigma\left(s,X_{R_n}(s)\right)
	-\sigma\left(s,X(s)\right)\bigr) \, \d W(s)}^2
	\\ & \qquad = \Ex \int_0^t
	\mathds{1}_{\left[0,\tau_N\right]}(s)
	\abs{\sigma\left(s,X_{R_n}(s)\right)
	-\sigma\left(s,X(s)\right)}^2 \, \d s
	\overset{n\to\infty}{\longrightarrow} 0.
\end{align*}
As a result, for any $\ep>0$,
\begin{align*}
	& \Px\bigl(\abs{M_{R_n}(t)}>\ep\bigr)
	\le \Px\bigl( \abs{M_{R_n}(t)}>\ep, \tau_N \ge t\bigr)
	+\Px\bigl(\tau_N<t\bigr)
	\\ & \qquad \le 
	\frac{1}{\ep^2}\Ex \abs{\int_0^{t}
	\bigl(\sigma\left(s,X_{R_n}(s)\right)
	-\sigma\left(s,X(s)\right)\bigr) \, \d W(s)}^2
	+\Px\bigl(\tau_N<t\bigr).
\end{align*}
Sending first $n\to \infty$ and then $N\to \infty$, we 
conclude that $M_{R_n}(t)
\overset{n\to\infty}{\longrightarrow}0$ 
in probability, and therefore, along a further 
subsequence (not relabelled),
$$
M_{R_n}(t)
\overset{n\to\infty}{\longrightarrow}0,
\quad  \text{$\mathbb{P}$-a.s.}
$$
This completes the proof that $X$ is a solution of 
the SDE \eqref{eq:X_sde_a}.
\end{proof}


\bibliographystyle{plain}

\end{document}